\numberwithin{equation}{section}
\newtheorem{theorem}{Theorem}
\newtheorem{lemma}{Lemma}
\newtheorem{proposition}{Proposition}
\theoremstyle{definition}
\newtheorem{definition}{Definition}
\newtheorem{remark}{Remark}
\newcommand{\pd}{\partial}
\newcommand{\C}{\mathbb{C}}
\newcommand{\R}{\mathbb{R}}
\newcommand{\Z}{\mathbb{Z}}
\newcommand{\T}{\mathbb{T}}
\newcommand{\dual}[2]{\langle #1, #2\rangle}
\newcommand{\eps}{\varepsilon}
\begin{document}

\begin{center}
\Large{\textbf{
Instability of solitary waves 
for nonlinear Schr\"odinger equations of derivative type
}}
\end{center}

\begin{center}
Dedicated to Professor Nakao Hayashi on his sixtieth birthday
\end{center}

\vspace{5mm}

\begin{center}
{\large Masahito Ohta} 
\end{center}
\begin{center}
Department of Mathematics, 
Tokyo University of Science, \\
1-3 Kagurazaka, Shinjuku-ku, Tokyo 162-8601, Japan \\
E-mail: mohta@rs.tus.ac.jp
\end{center}

\begin{abstract}
We study the orbital stablity and instability of solitary wave solutions 
for nonlinear Schr\"odinger equations of derivative type. 
\end{abstract}

\section{Introduction}

In this paper, we study the instability of solitary wave solutions 
for nonlinear Schr\"odinger equations of the form
\begin{equation}\label{dnls}
i\partial_tu
=-\pd_x^2u-i|u|^2 \pd_x u-b|u|^4u,
\quad (t,x)\in \R\times \R,
\end{equation}
where $b\ge 0$ is a constant. 
Eq. \eqref{dnls} appears in various areas of physics such as 
plasma physics, nonlinear optics, and so on 
(see, e.g., \cite{phys1,phys2} and also Introduction of \cite{oza}). 
It is known that 
\eqref{dnls} has a two parameter family of solitary wave solutions 
\begin{equation}\label{solitary}
u_{\omega}(t,x)
=e^{i\omega_0 t}\phi_{\omega}(x-\omega_1 t),
\end{equation}
where $\omega=(\omega_0,\omega_1)\in 
\Omega:=\{(\omega_0,\omega_1)\in \R^2: \omega_1^2<4 \omega_0\}$, 
$\gamma=1+\dfrac{16}{3}b$, 
\begin{align}
&\phi_{\omega}(x)=\tilde{\phi}_{\omega}(x) \exp \left(
i\frac{\omega_1}{2} x-\frac{i}{4} \int_{-\infty}^{x}|\tilde{\phi}_{\omega}(\eta)|^2\,d\eta\right), 
\label{solitary2} \\
&\tilde{\phi}_{\omega}(x)
=\left\{\frac{2(4\omega_0-\omega_1^2)}
{-\omega_1+\sqrt{\omega_1^2+\gamma (4\omega_0-\omega_1^2)}\,
\cosh (\sqrt{4\omega_0-\omega_1^2}\,x)}\right\}^{1/2}. 
\label{solitary3}
\end{align}
Here, we note that $\phi_{\omega}(x)$ is a solution of 
\begin{equation}\label{sp1}
-\pd_x^2\phi+\omega_0 \phi+\omega_1 i\pd_x \phi-i |\phi|^2 \pd_x \phi-b |\phi|^4\phi=0, 
\quad x\in \R,
\end{equation}
and $\tilde{\phi}_{\omega}(x)$ is a solution of 
\begin{equation}\label{sp2}
-\pd_x^2 \phi+\frac{4\omega_0-\omega_1^2}{4}\phi
+\frac{\omega_1}{2}|\phi|^2\phi
-\frac{3}{16}\gamma  |\phi|^4\phi=0,
\quad x\in \R.
\end{equation}

For $v$, $w\in L^2(\R)=L^2(\R,\C)$, we define
$$(v,w)_{L^2}=\Re \int_{\R} v(x) \overline{w(x)}\,dx,$$
and regard $L^2(\R)$ as a real Hilbert space. 
Similarly, $H^1(\R)=H^1(\R,\C)$ is regarded as a real Hilbert space with inner product 
$$(v,w)_{H^1}=(v,w)_{L^2}+(\pd_x v, \pd_x w)_{L^2}.$$

We define the energy $E:H^1(\R)\to \R$ by 
\begin{equation}\label{energy}
E(v)=\frac{1}{2}\|\pd_x v\|_{L^2}^2
-\frac{1}{4} (i|v|^2 \pd_x v, v)_{L^2}
-\frac{b}{6}\|v\|_{L^6}^6. 
\end{equation}
Then, we have 
$$E'(v)=-\pd_x^2v-i|v|^2 \pd_x v-b|v|^4v,$$
and \eqref{dnls} can be written in a Hamiltonian form 
$i\pd_t u=E'(u)$ in $H^{-1}(\R)$. 

For $\theta=(\theta_0,\theta_1)\in \R^2$ and $v\in H^1(\R)$, we define 
\begin{equation}\label{T0}
T(\theta)v(x)=e^{i\theta_0}v(x-\theta_1) \quad (x\in \R).
\end{equation}
Note that the energy $E$ is invariant under $T$, i.e., 
\begin{equation}\label{T1}
E(T(\theta)v)=E(v), \quad \theta\in \R^2, \, v\in H^1(\R),
\end{equation}
and that the solitary wave solution \eqref{solitary} 
is written as $u_{\omega}(t)=T(\omega t)\phi_{\omega}$. 

The Cauchy problem for \eqref{dnls} is locally well-posed in the energy space $H^1(\R)$ 
(see \cite{oza} and also \cite{hay,hay-oza1,hay-oza2}). 
For any $u_0\in H^1(\R)$, there exist $T_{\max}\in (0,\infty]$ and a unique solution 
$u\in C([0,T_{\max}),H^1(\R))$ of \eqref{dnls} with $u(0)=u_0$ 
such that either $T_{\max}=\infty$ or 
$T_{\max}<\infty$ and $\displaystyle{\lim_{t\to T_{\max}}\|u(t)\|_{H^1}=\infty}$. 
Moreover, the solution $u(t)$ satisfies 
\begin{align*}
E(u(t))=E(u_0), \quad Q_0(u(t))=Q_0(u_0), \quad Q_1(u(t))=Q_1(u_0)
\end{align*}
for all $t\in [0,T_{\max})$, 
where $Q_0$ and $Q_1$ are defined by 
\begin{equation}\label{charge}
Q_0(v)=\frac{1}{2}\|v\|_{L^2}^2, \quad
Q_1(v)=\frac{1}{2} (i\pd_x v,v)_{L^2}. 
\end{equation}

For $\eps>0$, we define
$$U_{\eps}(\phi_{\omega})=\{u\in H^1(\R): 
\inf_{\theta\in \R^2}\|u-T(\theta)\phi_{\omega}\|_{H^1}<\varepsilon\}.$$
Then, the stability and instability of solitary waves are defined as follows. 

\begin{definition}\label{def1}
We say that the solitary wave solution $T(\omega t)\phi_{\omega}$ 
of \eqref{dnls} is {\em stable} if 
for any $\varepsilon>0$ there exists $\delta>0$ such that 
if $u_0\in U_{\delta}(\phi_{\omega})$, 
then the solution $u(t)$ of \eqref{dnls} with $u(0)=u_0$ exists for all $t\ge 0$,  
and $u(t)\in U_{\eps}(\phi_{\omega})$ for all $t\ge 0$. 
Otherwise, $T(\omega t)\phi_{\omega}$ is said to be {\em unstable}. 
\end{definition}

For the case $b=0$, 
Colin and Ohta \cite{CO} proved that 
the solitary wave solution $T(\omega t)\phi_{\omega}$ of \eqref{dnls} 
is stable for all $\omega\in \Omega$ 
(see also \cite{GW,ZQZG}). 
We remark that the instability of solitary waves for \eqref{dnls} is not studied 
in previous papers \cite{CO,GW,ZQZG}. 
For a recent result on a generalized derivative nonlinear Schr\"odinger equation, 
see \cite{LSS}. 

In this paper, we consider the case $b>0$, and prove the following. 

\begin{theorem}\label{thm1}
Let $b>0$. 
Then there exists $\kappa=\kappa (b)\in (0,1)$ such that 
the solitary wave solution $T(\omega t)\phi_{\omega}$ of \eqref{dnls} 
is stable if $-2\sqrt{\omega_0}<\omega_1<2\kappa\sqrt{\omega_0}$, 
and unstable if $2\kappa\sqrt{\omega_0}<\omega_1<2\sqrt{\omega_0}$. 
\end{theorem}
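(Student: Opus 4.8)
The plan is to apply the Grillakis--Shatah--Strauss (GSS) framework, as adapted to \eqref{dnls} by Colin and Ohta \cite{CO} for the case $b=0$. First I would introduce the action $S_\omega = E + \omega_0 Q_0 + \omega_1 Q_1$, so that \eqref{sp1} reads $S_\omega'(\phi_\omega)=0$ and $\phi_\omega$ is a critical point of $S_\omega$. Setting $d(\omega)=S_\omega(\phi_\omega)$, the chain rule together with $S_\omega'(\phi_\omega)=0$ gives $\pd_{\omega_j} d(\omega) = Q_j(\phi_\omega)$, so the Hessian $d''(\omega)$ is the Jacobian matrix $\pd(Q_0(\phi_\omega),Q_1(\phi_\omega))/\pd(\omega_0,\omega_1)$. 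The GSS/Colin--Ohta theory then reduces Theorem \ref{thm1} to two inputs: (i) the Hessian $L_\omega=S_\omega''(\phi_\omega)$ has exactly one negative eigenvalue, with $\ker L_\omega = \mathrm{span}\{i\phi_\omega,\pd_x\phi_\omega\}$ and the rest of its spectrum positive and bounded away from $0$; and (ii) the signature of the $2\times2$ matrix $d''(\omega)$. With $n(L_\omega)=1$, the wave is stable when the number of positive eigenvalues of $d''(\omega)$ equals $1$, i.e. when $\det d''(\omega)<0$, and unstable when $\det d''(\omega)>0$; the threshold is the locus $\det d''(\omega)=0$.

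The spectral input (i) is the step I expect to be the main obstacle, because of the derivative nonlinearity $-i|u|^2\pd_x u$, which makes $L_\omega$ carry first-order terms. I would handle it by the gauge transformation implicit in \eqref{solitary2}: writing $v = e^{i\psi}w$ with $\psi(x)=\tfrac{\omega_1}{2}x-\tfrac14\int_{-\infty}^x|\tilde\phi_\omega|^2$, the operator $L_\omega$ is conjugated to an operator built around the real profile $\tilde\phi_\omega$ solving \eqref{sp2}, which splits into two scalar Schr\"odinger operators $L_+$ and $L_-$ acting on the real and imaginary parts. Since $\tilde\phi_\omega>0$ solves \eqref{sp2}, it is the ground state of $L_-$, so $n(L_-)=0$ and $\ker L_-=\mathrm{span}\{\tilde\phi_\omega\}$; and $L_+$, whose kernel contains $\pd_x\tilde\phi_\omega$ (a function with a single node), has exactly one negative eigenvalue, $n(L_+)=1$. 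Translating back through the gauge gives $n(L_\omega)=1$ and the asserted kernel. Verifying nondegeneracy and simplicity of these eigenvalues, and controlling the extra zeroth-order terms produced by the gauge change, is the delicate part.

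For input (ii) I would exploit a scaling that collapses $d''$ to a one-variable problem. Put $\alpha = 4\omega_0-\omega_1^2>0$ and $\sigma=\omega_1/\sqrt\alpha$; substituting $y=\sqrt\alpha\,x$ in \eqref{solitary3} shows $|\tilde\phi_\omega(x)|^2 = \sqrt\alpha\,F(\sigma,\sqrt\alpha\,x)$ for an explicit profile $F$, whence $\|\tilde\phi_\omega\|_{L^2}^2 = g_1(\sigma)$ and $\|\tilde\phi_\omega\|_{L^4}^4 = \sqrt\alpha\,g_2(\sigma)$, with $g_1,g_2$ given by elementary integrals of $(-\sigma+\sqrt{\sigma^2+\gamma}\cosh y)^{-1}$ and its square. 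Using $Q_0(\phi_\omega)=\tfrac12 g_1(\sigma)$ and $Q_1(\phi_\omega)=-\tfrac{\omega_1}{4}\|\tilde\phi_\omega\|_{L^2}^2+\tfrac18\|\tilde\phi_\omega\|_{L^4}^4 = \sqrt\alpha\,h(\sigma)$ with $h(\sigma)=-\tfrac\sigma4 g_1(\sigma)+\tfrac18 g_2(\sigma)$, a direct computation of the two Jacobians in $\det d''=\det\tfrac{\pd(Q_0,Q_1)}{\pd(\alpha,\sigma)}\cdot\det\tfrac{\pd(\alpha,\sigma)}{\pd(\omega_0,\omega_1)}$ yields the clean identity $\det d''(\omega)=-\tfrac1\alpha\,g_1'(\sigma)\,h(\sigma)$.

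It then remains to analyze the two scalar factors. The closed form $g_1(\sigma)=\tfrac{8}{\sqrt\gamma}\arctan\sqrt{(\sqrt{\sigma^2+\gamma}+\sigma)/(\sqrt{\sigma^2+\gamma}-\sigma)}$ is strictly increasing, so $g_1'(\sigma)>0$ for all $\sigma$, and the sign of $\det d''$ is exactly the opposite of that of $h(\sigma)$. Hence the wave is stable where $h(\sigma)>0$ and unstable where $h(\sigma)<0$. The final step, and the only place where $b>0$ (i.e. $\gamma>1$) is used, is to show that $h$ has a unique zero $\sigma_*=\sigma_*(b)>0$ and changes sign there from positive to negative; for $b=0$ one has $h>0$ throughout, recovering the stability of \cite{CO}. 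Evaluating the elementary integrals defining $g_2$ and studying $h$ by monotonicity should give this single sign change. Setting $\kappa=\sigma_*/\sqrt{1+\sigma_*^2}\in(0,1)$ and noting that $\omega_1\lessgtr 2\kappa\sqrt{\omega_0}$ is equivalent to $\sigma\lessgtr\sigma_*$ converts the condition on $h$ into the stated condition on $\omega_1$, completing the proof.
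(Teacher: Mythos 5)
Your reduction of the problem to the sign of $d''(\omega)$, and your computation in step (ii), are sound and in fact reproduce the paper's own computational core: your $h(\sigma)$ is, up to normalization, the paper's $\frac{1}{\gamma}\{1-g(\xi)\}$ with $\xi=\sqrt{\gamma}/\sigma$ (Lemma \ref{thm4} and Remark \ref{rem1}), your identity $\det d''=-\frac{1}{\alpha}g_1'(\sigma)h(\sigma)$ with $g_1'>0$ matches the paper's $\det[d''(\omega)]=-cQ_1(\phi_\omega)$ with $c>0$, and the threshold $\kappa$ arises the same way. The genuine gap is in your step (i). The claim that conjugating $S_{\omega}''(\phi_{\omega})$ by the \emph{fixed} phase $e^{i\psi(x)}$ produces two decoupled scalar Schr\"odinger operators $L_{\pm}$ is false. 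After the substitution $v=e^{i\psi}w$ the functional still contains the quartic momentum-type term $-\frac14(i|w|^2\pd_x w,w)_{L^2}$, and its second variation at the \emph{real} profile $\tilde\phi_{\omega}$ does not split: writing the perturbation as $h_1+ih_2$, it produces cross terms of the form
\begin{equation*}
\int_{\R}\Bigl(3\tilde\phi_{\omega}^2\,h_1\pd_x h_2-\tilde\phi_{\omega}^2\,h_2\pd_x h_1-2\tilde\phi_{\omega}\pd_x\tilde\phi_{\omega}\,h_1h_2\Bigr)\,dx,
\end{equation*}
which couple real and imaginary parts through derivatives. The transformation that removes the derivative nonlinearity from the \emph{elliptic} equation (turning \eqref{sp1} into \eqref{sp2}) is nonlinear — its phase involves $\int_{-\infty}^{x}|u|^2$ of the unknown itself — so its linearization is not multiplication by a fixed phase and introduces nonlocal terms. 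Consequently the spectral hypotheses of Grillakis--Shatah--Strauss ($n(S_{\omega}''(\phi_{\omega}))=1$, kernel spanned exactly by $i\phi_{\omega}$ and $\pd_x\phi_{\omega}$, spectral gap) are left unverified, and this is precisely the hard part for the derivative NLS; your proof rests entirely on it.

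This is also exactly the point where the paper takes a different route, designed to avoid spectral analysis altogether. For instability (Theorem \ref{thm3}) it uses only two pieces of information about $S_{\omega}''(\phi_{\omega})$: $\dual{S_{\omega}''(\phi_{\omega})\phi_{\omega}}{\phi_{\omega}}<0$ (Lemma \ref{lem2.2}, a one-line scaling argument) and nonnegativity of $S_{\omega}''(\phi_{\omega})$ on the tangent space of the Nehari-type manifold $\{K_{\omega}=0\}$ (Lemma \ref{lem:vc2}), both consequences of the variational characterization of $\phi_{\omega}$ as a minimizer (Lemma \ref{lem:vc1}); negative definiteness of $d''$ then yields an unstable direction (Lemma \ref{lem2.3}), and instability follows by a Gon\c{c}alves Ribeiro--type Lyapunov argument (Proposition \ref{prop1}), with stability (Theorem \ref{thm2}) handled as in Colin--Ohta. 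If you want to salvage your approach you must either carry out the (delicate, nonlocal) spectral analysis of the linearized DNLS operator, or replace step (i) by such a variational argument. A further, more minor, point: in the unstable regime GSS requires $p(d'')=0$, i.e.\ negative definiteness; $\det d''(\omega)>0$ alone does not rule out $p(d'')=2$, so you must also check a diagonal entry, as the paper does via $\pd_{\omega_0}Q_0(\phi_{\omega})<0$ when $\omega_1>0$.
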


\begin{remark}\label{rem1}
Let $b>0$, $\gamma=1+\dfrac{16}{3}b$, and 
\begin{equation}\label{def:g}
g(\xi)=\frac{2(\gamma-1)}{\xi}
\tan^{-1} \frac{1+\sqrt{1+\xi^2}}{\xi}, \quad \xi\in (0,\infty).
\end{equation}
Then, $g:(0,\infty)\to (0,\infty)$ is strictly decreasing and bijective. 
Thus, for any $b>0$, 
there exists a unique $\hat \xi=\hat \xi(b)\in (0,\infty)$ such that $g(\hat \xi)=1$. 
The constant $\kappa$ in Theorem \ref{thm1} 
is given by $\kappa=(1+\hat \xi^2/\gamma)^{-1/2}$
(see Lemma \ref{thm4} below). 
\end{remark}

\begin{remark}
The sufficient condition $-2\sqrt{\omega_0}<\omega_1<2\kappa\sqrt{\omega_0}$ 
for stability of $T(\omega t)\phi_{\omega}$ is equivalent to $Q_1(\phi_{\omega})>0$, 
and the sufficient condition $2\kappa\sqrt{\omega_0}<\omega_1<2\sqrt{\omega_0}$  
for instability is equivalent to $Q_1(\phi_{\omega})<0$ 
(see Lemma \ref{thm4} and Proof of Theorem \ref{thm1} below). 
We also remark that 
$E(\phi_{\omega})=-\dfrac{\omega_1}{2}Q_1(\phi_{\omega})$ 
for all $\omega\in \Omega$. 
\end{remark}

\begin{remark}
We do not study the borderline case $\omega_1=2\kappa \sqrt{\omega_0}$ in this paper, 
and leave it as an open problem. 
Note that $E(\phi_{\omega})=Q_1(\phi_{\omega})=0$ 
in the case $\omega_1=2\kappa \sqrt{\omega_0}$. 
For related results for one-parameter family of solitary waves in borderline cases, 
see \cite{CP,OT,oht2,maeda}. 
\end{remark}

\begin{remark}
It is not known whether \eqref{dnls} has finite time blowup solutions or not. 
It will be interesting to study relations 
between unstable solitary wave solutions obtained in Theorem \ref{thm1} 
and the existence of blowup solutions for \eqref{dnls}. 
For a recent progress in this direction, see Wu \cite{wu1,wu2}. 
\end{remark}

For $\omega\in \Omega$, 
we define the action $S_{\omega}:H^1(\R)\to \R$ by 
$$S_{\omega}(v)=E(v)+\sum_{j=0}^{1}\omega_j Q_j(v),$$
where $E$, $Q_0$ and $Q_1$ are defined by \eqref{energy} and \eqref{charge}. 
Note that $Q_0'(v)=v$, $Q_1'(v)=i\pd_xv$, 
and that \eqref{sp1} is equivalent to $S_{\omega}'(\phi)=0$. 

We also define a function $d:\Omega\to \R$ by 
$$d(\omega)=S_{\omega}(\phi_{\omega})
=E(\phi_{\omega})
+\sum_{j=0}^{1}\omega_j Q_j(\phi_{\omega}).$$
Then, we have 
$$d'(\omega)=(\pd_{\omega_0}d(\omega), \pd_{\omega_1}d(\omega))
=(Q_0(\phi_{\omega}),Q_1(\phi_{\omega})),$$
and the Hessian matrix $d''(\omega)$ of $d(\omega)$ is given by 
$$d''(\omega)
=\left[\begin{array}{cc}
\pd_{\omega_0}^2 d(\omega) &   \pd_{\omega_1} \pd_{\omega_0} d(\omega) \\
\pd_{\omega_0} \pd_{\omega_1} d(\omega) & \pd_{\omega_1}^2 d(\omega) 
\end{array}\right]
=\left[\begin{array}{rr}
\pd_{\omega_0} Q_0(\phi_{\omega}) &   \pd_{\omega_1} Q_0(\phi_{\omega}) \\
\pd_{\omega_0} Q_1(\phi_{\omega}) &   \pd_{\omega_1} Q_1(\phi_{\omega})
\end{array}\right].$$

To prove Theorem \ref{thm1}, we use the following sufficient conditions 
for stability and instability in terms of the Hessian matrix $d''(\omega)$ 
(see \cite{gss2}). 

\begin{theorem}\label{thm2}
Let $\omega\in \Omega$. 
If the matrix $d''(\omega)$ has a positive eigenvalue, 
then the solitary wave solution 
$T(\omega t)\phi_{\omega}$ of \eqref{dnls} is stable. 
\end{theorem}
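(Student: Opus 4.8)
The statement is an application of the abstract orbital stability theory of Grillakis, Shatah and Strauss \cite{gss2} to the present two-parameter setting, so the plan is to cast the problem in that framework, verify its hypotheses, and quote the stability conclusion. The symmetry group is the abelian group $\R^2$ acting through $T(\theta)$ in \eqref{T0}, with infinitesimal generators $v\mapsto iv$ and $v\mapsto -\pd_x v$; the associated Noether charges are $Q_0$ and $Q_1$, and $\phi_\omega$ is a critical point of the action, $S_\omega'(\phi_\omega)=0$. In this framework the objects that govern stability are the Hessian $H_\omega:=S_\omega''(\phi_\omega)$, regarded as a self-adjoint operator on the real Hilbert space $L^2(\R)$ with form domain $H^1(\R)$, together with the $2\times 2$ matrix $d''(\omega)$. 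Once it is shown that $H_\omega$ has exactly one negative eigenvalue (so that its Morse index is $n(H_\omega)=1$), that $\ker H_\omega$ is exactly the two-dimensional tangent space $\mathrm{span}\{i\phi_\omega,\pd_x\phi_\omega\}$ of the group orbit, and that the rest of the spectrum is positive and bounded away from $0$, the theory of \cite{gss2} yields stability as soon as $d''(\omega)$ has a positive eigenvalue (equivalently, as soon as $d''(\omega)$ is not negative definite). For a one-dimensional symmetry group this reduces to the classical Vakhitov--Kolokolov condition, a useful consistency check.

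The substance of the proof is therefore the spectral analysis of $H_\omega$. First I would remove the derivative nonlinearity by the gauge transformation underlying \eqref{solitary2}. Writing $m_\omega(x)=\exp\!\bigl(i\tfrac{\omega_1}{2}x-\tfrac{i}{4}\int_{-\infty}^x|\tilde\phi_\omega|^2\bigr)$, so that $\phi_\omega=m_\omega\,\tilde\phi_\omega$ with $\tilde\phi_\omega>0$ solving the reduced equation \eqref{sp2}, I would study the quadratic form $\dual{H_\omega v}{v}$ under the substitution $v=m_\omega w$. Since $|m_\omega|=1$, multiplication by $m_\omega$ is unitary on $L^2(\R)$, and the gauge factor is built precisely to absorb the terms $\omega_1 i\pd_x\phi$ and $i|\phi|^2\pd_x\phi$ in \eqref{sp1}; the expectation is that this conjugation carries $H_\omega$ onto an operator $\tilde L_\omega$ attached to the linearization of the non-derivative equation \eqref{sp2} about the positive ground state $\tilde\phi_\omega$.

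Splitting $\tilde L_\omega$ into its action on the real and imaginary parts relative to $\tilde\phi_\omega$ then produces two scalar Schr\"odinger operators $\tilde L_+$ and $\tilde L_-$ with explicit potentials built from $\tilde\phi_\omega$ in \eqref{solitary3}. Standard Sturm--Liouville theory gives $\tilde L_-\ge 0$ with $\ker\tilde L_-=\mathrm{span}\{\tilde\phi_\omega\}$, since the nodeless $\tilde\phi_\omega>0$ is its ground state, and gives $\tilde L_+$ exactly one negative eigenvalue with $\ker\tilde L_+=\mathrm{span}\{\pd_x\tilde\phi_\omega\}$, since $\pd_x\tilde\phi_\omega$ has a single sign change and is thus the first excited state at energy $0$. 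Transporting this back through $m_\omega$ should yield $n(H_\omega)=1$, $\ker H_\omega=\mathrm{span}\{i\phi_\omega,\pd_x\phi_\omega\}$, and positive essential spectrum, which are exactly the hypotheses required. The main obstacle is precisely this spectral step: because the gauge factor depends on $\phi$ itself through $\int|\phi|^2$, linearizing the gauge map produces extra terms, and one must verify that the conjugation genuinely block-diagonalizes $H_\omega$ with no residual first-order cross term, that the boost factor $e^{i\omega_1 x/2}$ generates no additional negative direction, and that the kernel is no larger than the two symmetry directions. Once these points are settled, the conclusion follows directly from \cite{gss2}.
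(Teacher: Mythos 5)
Your plan is genuinely different from the paper's, and as written it has a real gap: the entire mathematical substance of your argument is the spectral step that you yourself defer as ``the main obstacle,'' and the mechanism you propose for that step is not the right one. The gauge map $\mathcal{G}(u)(x)=u(x)\exp\bigl(-i\tfrac{\omega_1}{2}x+\tfrac{i}{4}\int_{-\infty}^{x}|u(\eta)|^{2}\,d\eta\bigr)$ carrying \eqref{sp1} into \eqref{sp2} is nonlinear and nonlocal, so its linearization at $\phi_{\omega}$ is \emph{not} multiplication by $\overline{m_{\omega}}$: one has $D\mathcal{G}(\phi_{\omega})v=\overline{m_{\omega}}\,v+\tfrac{i}{2}\,\tilde{\phi}_{\omega}(x)\int_{-\infty}^{x}\Re\bigl(\overline{\phi_{\omega}}\,v\bigr)\,d\eta$. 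Consequently, conjugating $S_{\omega}''(\phi_{\omega})$ by the unitary multiplication operator $m_{\omega}$ does \emph{not} produce the decoupled pair $\tilde{L}_{+}\oplus\tilde{L}_{-}$; first-order and nonlocal couplings survive, and even the zero modes fail to match, since $m_{\omega}\pd_{x}\tilde{\phi}_{\omega}\neq\pd_{x}\phi_{\omega}=\pd_{x}(m_{\omega}\tilde{\phi}_{\omega})$. What would actually rescue your route is an exact functional identity that your proposal never states: with $\tilde{S}_{\omega}$ the action functional of the cubic--quintic equation \eqref{sp2}, one has $S_{\omega}(u)=\tilde{S}_{\omega}(\mathcal{G}(u))$ for \emph{all} $u\in H^{1}(\R)$ (the verification is a computation that uses precisely $\gamma=1+\tfrac{16}{3}b$). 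Since $\tilde{S}_{\omega}'(\tilde{\phi}_{\omega})=0$, this identity yields the congruence $S_{\omega}''(\phi_{\omega})=D\mathcal{G}(\phi_{\omega})^{*}\,\tilde{S}_{\omega}''(\tilde{\phi}_{\omega})\,D\mathcal{G}(\phi_{\omega})$ with $D\mathcal{G}(\phi_{\omega})$ an isomorphism of $H^{1}(\R)$, and only through this congruence --- not through unitary conjugation --- do the Morse index $n=1$, the two-dimensional kernel, and the coercivity transfer from your Sturm--Liouville analysis of $\tilde{L}_{\pm}$ to $S_{\omega}''(\phi_{\omega})$. Two further inaccuracies: the stability criterion of \cite{gss2} is $p(d''(\omega))=n(S_{\omega}''(\phi_{\omega}))$ with $d''(\omega)$ nondegenerate, and ``$d''(\omega)$ has a positive eigenvalue'' is not equivalent to ``$d''(\omega)$ is not negative definite'' (a zero eigenvalue is possible); in this problem one needs the explicit formulas of Lemma \ref{thm4} to see that a positive eigenvalue forces $\det[d''(\omega)]<0$, hence $p=1=n$ and nondegeneracy.

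Your route is also far from the paper's. The paper gives no spectral argument at all: it invokes Colin--Ohta \cite{CO}, whose method was designed precisely to avoid analyzing the linearized operator of a derivative NLS (the difficulty you ran into above). The substitute is the variational characterization of Section \ref{sect2}: $\phi_{\omega}$ minimizes $S_{\omega}$ on the Nehari-type constraint set $\{K_{\omega}=0\}$ (Lemma \ref{lem:vc1}), and this alone supplies the required definiteness information --- $\dual{S_{\omega}''(\phi_{\omega})v}{v}\ge 0$ on the codimension-one subspace $\dual{K_{\omega}'(\phi_{\omega})}{v}=0$ (Lemma \ref{lem:vc2}), hence Morse index exactly one by Lemma \ref{lem2.2} --- with no gauge transformation, no Sturm--Liouville theory, and no kernel identification. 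The stability proof in \cite{CO} then runs on the conservation laws, this variational information, and the positive eigenvalue of $d''(\omega)$. So your approach could in principle be completed along the lines indicated above, but it is heavier, it hinges on the functional identity you omitted, and it is not the argument the paper relies on.
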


\begin{theorem}\label{thm3}
Let $\omega\in \Omega$. 
If $d''(\omega)$ is negative definite 
(all eigenvalues of $d''(\omega)$ are negative), 
then the solitary wave solution 
$T(\omega t)\phi_{\omega}$ of \eqref{dnls} is unstable. 
\end{theorem}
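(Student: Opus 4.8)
The plan is to obtain Theorem \ref{thm3} from the abstract instability theory of Grillakis, Shatah and Strauss \cite{gss2}; the real work is to cast \eqref{dnls} into that framework and to supply the one model-specific ingredient the theory needs, namely the spectral count of the Hessian of the action. I would first record the variational structure. The group orbit $\{T(\theta)\phi_\omega\}$ has tangent space spanned by the two generators $i\phi_\omega$ and $\pd_x\phi_\omega$, and differentiating $S_\omega'(\phi_\omega)=0$ in $\omega_k$ gives
\[
H_\omega\,\pd_{\omega_k}\phi_\omega=-Q_k'(\phi_\omega),\qquad H_\omega:=S_\omega''(\phi_\omega),\quad k=0,1 .
\]
Pairing with $Q_j'(\phi_\omega)$ and using $d''(\omega)_{jk}=(Q_j'(\phi_\omega),\pd_{\omega_k}\phi_\omega)_{L^2}$ shows that $-d''(\omega)$ is exactly the Gram matrix of $H_\omega$ on $\mathrm{span}\{\pd_{\omega_0}\phi_\omega,\pd_{\omega_1}\phi_\omega\}$. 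In particular the hypothesis that $d''(\omega)$ is negative definite says precisely that $H_\omega$ is positive definite on this two-dimensional space.

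Second, I would establish the spectral picture of the self-adjoint operator $H_\omega$ on which the whole theory rests: for every $\omega\in\Omega$ its kernel is exactly $\mathrm{span}\{i\phi_\omega,\pd_x\phi_\omega\}$, it has precisely one negative eigenvalue, and the rest of its spectrum is positive and bounded away from $0$. Because of the gauge phase in \eqref{solitary2}, $H_\omega$ is not directly a Schr\"odinger operator; I would first conjugate it by the gauge factor appearing in \eqref{solitary2}, turning it into a $2\times2$ matrix Schr\"odinger operator in the real and imaginary parts whose entries carry explicit $\mathrm{sech}$-type potentials built from $\tilde\phi_\omega$. The kernel characterization and the count ``one negative eigenvalue'' then follow from Sturm--Liouville theory applied to these explicit potentials, through the nodal structure of $\tilde\phi_\omega$ (nodeless) and of $\pd_x\tilde\phi_\omega$ (a single node).

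Third, with $n(H_\omega)=1$ and $d''(\omega)$ negative definite, the counting lemma of \cite{gss2} shows that the restriction of the quadratic form $(H_\omega\,\cdot\,,\cdot)_{L^2}$ to the codimension-two space $V=\{w:(Q_0'(\phi_\omega),w)_{L^2}=(Q_1'(\phi_\omega),w)_{L^2}=0\}$ still has exactly one negative direction $\chi\in V$ (the constrained Morse index equals $n(H_\omega)$ minus the number of positive eigenvalues of $d''(\omega)$, that is $1-0=1$). From $\chi$ one constructs, as in \cite{gss2}, a Lyapunov/virial functional $\mathcal A$ that is bounded on a fixed tube around the orbit and whose derivative along the flow satisfies $\frac{d}{dt}\mathcal A(u(t))\ge c>0$ — to leading order $-(H_\omega\chi,\chi)_{L^2}>0$ — as long as $u(t)$ stays in the tube; combined with conservation of $E,Q_0,Q_1$ and the modulation decomposition $u=T(\theta)(\phi_\omega+w)$, boundedness of $\mathcal A$ then forces the solution to leave the tube in finite time, contradicting stability in the sense of Definition \ref{def1}.

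The main obstacle is the second step: proving $n(H_\omega)=1$ together with the exact two-dimensional kernel. Once the gauged matrix operator is in hand this reduces to a Sturm--Liouville nodal computation, but it is the only genuinely model-dependent input, and it is here that the derivative nonlinearity makes the analysis more delicate than for standard nonlinear Schr\"odinger equations. Everything else is the general machinery of \cite{gss2}, with the non-degeneracy of $d''(\omega)$ (automatic from negative definiteness) ensuring that the modulation argument closes without any further spectral assumption.
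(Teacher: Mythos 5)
Your step 2 is a genuine gap, and it is precisely the step the paper is engineered to avoid. You reduce everything to the spectral count $n(S_\omega''(\phi_\omega))=1$ together with the kernel characterization $\ker S_\omega''(\phi_\omega)=\mathrm{span}\{i\phi_\omega,\pd_x\phi_\omega\}$, and you propose to obtain these by gauging away the phase in \eqref{solitary2} and applying Sturm--Liouville nodal theory. But the linearization of the derivative nonlinearity at $\phi_\omega$ contains variable-coefficient first-order terms: differentiating $u\mapsto i|u|^2\pd_x u$ at $\phi_\omega$ in the direction $v$ produces $i|\phi_\omega|^2\pd_x v+i(\phi_\omega\bar v+\bar\phi_\omega v)\pd_x\phi_\omega$, and since the operator acts only $\R$-linearly (it couples $v$ and $\bar v$), writing it in real and imaginary parts gives a $2\times 2$ system in which a single scalar gauge factor cannot eliminate all first-order terms. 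The result is not a matrix Schr\"odinger operator ``with potentials built from $\tilde\phi_\omega$,'' and nodal/oscillation theory --- which is a scalar second-order tool --- does not apply to such systems. So the ``only genuinely model-dependent input'' of your argument is asserted rather than proven, and the method you name for proving it would not go through as described. (A secondary, smaller issue: invoking the abstract instability theorem of \cite{gss2} with the unbounded translation generator requires care; this is why one works with a bounded auxiliary functional as in \cite{GR}.)

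It is worth seeing how the paper sidesteps this entirely. Instead of a spectral count, it uses the variational characterization of $\phi_\omega$ as a minimizer of $S_\omega$ on the Nehari-type constraint $\{K_\omega=0\}$ (Lemma \ref{lem:vc1}). This yields, with no ODE analysis, exactly the two facts your Morse-index bookkeeping was meant to deliver: Lemma \ref{lem:vc2} gives $\dual{S_\omega''(\phi_\omega)v}{v}\ge 0$ on the codimension-one space $\{\dual{K_\omega'(\phi_\omega)}{v}=0\}$ (an effective substitute for $n=1$), while $\dual{S_\omega''(\phi_\omega)\phi_\omega}{\phi_\omega}<0$ (Lemma \ref{lem2.2}) provides the negative direction. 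The hypothesis that $d''(\omega)$ is negative definite is then used not through a counting formula but constructively: the curve $\varphi_s=s\phi_{\hat\omega}+\phi_{\gamma(s)}$, with $\gamma(s)$ chosen by the implicit function theorem so that $Q_0,Q_1$ stay constant, produces an explicit $\psi$ with $\dual{Q_j'(\phi_{\hat\omega})}{\psi}=0$ and $\dual{S_{\hat\omega}''(\phi_{\hat\omega})\psi}{\psi}<0$ (Lemma \ref{lem2.3}); your Gram-matrix identity $\dual{S_\omega''(\phi_\omega)\pd_{\omega_j}\phi_\omega}{\pd_{\omega_k}\phi_\omega}=-\pd_{\omega_j}\pd_{\omega_k}d(\omega)$ is correct and is exactly the computation driving that lemma. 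Instability then follows from a Gon\c{c}alves Ribeiro--type Lyapunov argument (Proposition \ref{prop1}) in which the lower bound on $|P(u(t))|$ comes again from the Nehari minimization property, not from coercivity of the Hessian on a constrained subspace. If you want to salvage your outline, replace your step 2 by this variational characterization; as written, the proposal rests on an unproven and, by the stated method, unprovable spectral claim.
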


Theorem \ref{thm2} can be proved in the same way as in Colin and Ohta \cite{CO}, 
and we omit the proof. 
We give the proof of Theorem \ref{thm3} in Section \ref{sect3} below. 
As we stated above, the instability of solitary waves for \eqref{dnls} 
has not been studied in previous papers \cite{CO,GW,ZQZG}. 

Moreover, by the explicit form \eqref{solitary2} with \eqref{solitary3} of $\phi_{\omega}$, 
and by elementary computations, 
we have the following. 

\begin{lemma}\label{thm4}
Let $b>0$ and $\gamma=1+\dfrac{16}{3}b$. 
For $\omega\in \Omega$,  we have 
\begin{align*}
&Q_0(\phi_{\omega})
=\frac{4}{\sqrt{\gamma}}
\tan^{-1}
\frac{\omega_1+\sqrt{\omega_1^2+\gamma (4\omega_0-\omega_1^2)}}
{\sqrt{\gamma (4\omega_0-\omega_1^2)}}, \\
&Q_1(\phi_{\omega})
=\frac{1}{\gamma^{3/2}}\left\{\sqrt{\gamma (4\omega_0-\omega_1^2)}\right. \\
&\hspace{30mm} \left.
-2(\gamma-1) \omega_1 \tan^{-1}
\frac{\omega_1+\sqrt{\omega_1^2+\gamma (4\omega_0-\omega_1^2)}}
{\sqrt{\gamma (4\omega_0-\omega_1^2)}}\right\}, \\
&\det [d''(\omega)]
=\frac{-\gamma Q_1(\phi_{\omega})}
{\sqrt{\gamma (4\omega_0-\omega_1^2)}
\{\omega_1^2+\gamma (4\omega_0-\omega_1^2)\}}. 
\end{align*}
\end{lemma}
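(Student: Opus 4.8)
The plan is to reduce all three quantities to two real integrals of the amplitude $\tilde\phi_\omega$, evaluate those by elementary substitutions, and then differentiate the resulting closed forms. Write $\phi_\omega=\tilde\phi_\omega e^{i\Theta}$, where by \eqref{solitary2} the phase satisfies $\Theta'=\frac{\omega_1}{2}-\frac14\tilde\phi_\omega^2$. Since $|\phi_\omega|=\tilde\phi_\omega$, this gives at once $Q_0(\phi_\omega)=\frac12\int_\R\tilde\phi_\omega^2\,dx$. For $Q_1$ I compute $\Re(i\pd_x\phi_\omega\,\overline{\phi_\omega})=-\tilde\phi_\omega^2\,\Theta'$, so that $Q_1(\phi_\omega)=-\frac{\omega_1}{4}\int_\R\tilde\phi_\omega^2\,dx+\frac18\int_\R\tilde\phi_\omega^4\,dx$. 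Thus everything is reduced to $I_2:=\int_\R\tilde\phi_\omega^2\,dx$ and $I_4:=\int_\R\tilde\phi_\omega^4\,dx$.

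Next I evaluate $I_2$ and $I_4$. Abbreviating $A=4\omega_0-\omega_1^2>0$ and $B=\sqrt{\omega_1^2+\gamma A}$ (so $B>|\omega_1|$), the profile \eqref{solitary3} reads $\tilde\phi_\omega^2=2A/(B\cosh(\sqrt A\,x)-\omega_1)$. The substitution $s=\sqrt A\,x$ followed by $t=e^s$ turns $I_2$, up to the factor $2\sqrt A$, into the rational integral $\int_0^\infty 2\,dt/(Bt^2-2\omega_1 t+B)$, whose value is a multiple of $\frac{\pi}{2}+\arctan\frac{\omega_1}{\sqrt{\gamma A}}$; the half-angle identity $\frac{\pi}{2}+\arctan\frac{\omega_1}{\sqrt{\gamma A}}=2\arctan\frac{\omega_1+B}{\sqrt{\gamma A}}$ (checked by taking $\tan$ of both sides, each angle lying in $(0,\pi)$) puts $Q_0$ into the stated form. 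For $I_4$ I integrate the identity $\frac{d}{ds}\frac{\sinh s}{B\cosh s-\omega_1}=\frac{B-\omega_1\cosh s}{(B\cosh s-\omega_1)^2}$ over $\R$; here the boundary values are $\pm 1/B$, so the left side contributes $2/B$ rather than $0$, and after writing $B-\omega_1\cosh s$ as a combination of $(B\cosh s-\omega_1)$ and a constant I get a relation expressing $\int_\R(B\cosh s-\omega_1)^{-2}\,ds$ through $\int_\R(B\cosh s-\omega_1)^{-1}\,ds$. This yields $I_4$ as an explicit affine function of $I_2$, and substituting into the formula for $Q_1$ gives the second claim. The nonvanishing boundary term $2/B$ is the one place it is easy to slip.

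For the Hessian I differentiate the closed forms $Q_0=\frac{4}{\sqrt\gamma}T$ and $Q_1=\frac{\sqrt A}{\gamma}-\frac{2(\gamma-1)\omega_1}{\gamma^{3/2}}T$, where $T=\arctan\frac{\omega_1+B}{\sqrt{\gamma A}}$. The derivatives of $T$ simplify because $1+\big(\frac{\omega_1+B}{\sqrt{\gamma A}}\big)^2=\frac{2B(B+\omega_1)}{\gamma A}$, and it is cleanest to differentiate first in the variables $(A,\omega_1)$ and then multiply by the constant Jacobian determinant of $(\omega_0,\omega_1)\mapsto(A,\omega_1)$, which equals $4$. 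Forming the $2\times2$ determinant, the algebraic (non-$T$) terms collapse using $B^2+(\gamma-1)\omega_1^2=\gamma(\omega_1^2+A)$ and $\omega_1^2+\gamma A=B^2$, while the terms carrying $T$ are recombined through $\frac{2(\gamma-1)\omega_1}{\gamma^{3/2}}T=\frac{\sqrt A}{\gamma}-Q_1$; the two contributions proportional to $1/(\gamma B^2)$ cancel, leaving a single multiple of $Q_1$ over $\sqrt A\,B^2$. Note that $\det[d''(\omega)]$ therefore has the opposite sign to $Q_1(\phi_\omega)$, which is exactly the information fed into Theorems \ref{thm2} and \ref{thm3}.

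The determinant is the main obstacle: keeping track of $\pd_A T$, $\pd_{\omega_1}T$, the square-root derivatives, and spotting the cancellation of the $1/(\gamma B^2)$ terms requires care, and the overall numerical constant in the numerator is delicate. I would pin that constant down independently by testing the clean case $\omega_1=0$, where $Q_1=\sqrt A/\gamma$ and $d''(\omega)$ is diagonal, so that $\det[d''(\omega)]$ can be read off directly and compared with the general formula before trusting it.
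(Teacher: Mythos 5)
Your reduction of $Q_0$ and $Q_1$ to the two integrals $I_2,I_4$ is correct, and so are your evaluations: with $A=4\omega_0-\omega_1^2$ and $B=\sqrt{\omega_1^2+\gamma A}$, the substitution $t=e^s$ gives $\int_\R\frac{ds}{B\cosh s-\omega_1}=\frac{2}{\sqrt{\gamma A}}\left(\frac{\pi}{2}+\arctan\frac{\omega_1}{\sqrt{\gamma A}}\right)$, your half-angle identity is valid (both angles lie in $(0,\pi)$, where $\tan$ is injective away from $\pi/2$), and the $I_4$ computation with the nonzero boundary term $2/B$ yields $I_4=\frac{8\sqrt A}{\gamma}+\frac{16\omega_1}{\gamma^{3/2}}T$, $T=\arctan\frac{\omega_1+B}{\sqrt{\gamma A}}$, which reproduces the stated $Q_0$ and $Q_1$ exactly. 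Since the paper dismisses this lemma as ``elementary computations'' and prints no proof, your write-up is a legitimate way of supplying one, and your determinant strategy (differentiate in $(A,\omega_1)$, multiply by the Jacobian factor $4$, use the listed identities) is sound; the cancellation you describe does occur, and the conclusion that $\det[d''(\omega)]$ is a positive multiple of $-Q_1(\phi_\omega)$ is precisely what Theorems \ref{thm2} and \ref{thm3} consume.

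Two corrections, though. First, your sanity check misdescribes the case $\omega_1=0$: there $d''(\omega)$ is \emph{not} diagonal. Rather, the diagonal entry $\pd_{\omega_0}Q_0(\phi_\omega)=-4\omega_1/(\sqrt A\,B^2)$ vanishes at $\omega_1=0$, the off-diagonal entries equal $1/(\gamma\sqrt{\omega_0})$, and $\det[d''(\omega)]=-\left(\pd_{\omega_0}\pd_{\omega_1}d\right)^2=-1/(\gamma^2\omega_0)$; reading the matrix as diagonal would give $0$. Second, executing your own outline to the end gives, in the $(A,\omega_1)$ variables, $\pd_AQ_0=-\omega_1/(B^2\sqrt A)$, $\pd_{\omega_1}Q_0=2\sqrt A/B^2$, $\pd_AQ_1=(A+\omega_1^2)/(2\sqrt A\,B^2)$, $\pd_{\omega_1}Q_1=-\frac{2(\gamma-1)}{\gamma^{3/2}}T-\frac{(\gamma-1)\omega_1\sqrt A}{\gamma B^2}$, and four times the resulting $2\times 2$ determinant collapses to $\det[d''(\omega)]=-4Q_1(\phi_\omega)/(\sqrt A\,B^2)$. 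This differs from the lemma's displayed formula, $-\gamma Q_1/(\sqrt{\gamma A}\,B^2)=-\sqrt\gamma\,Q_1/(\sqrt A\,B^2)$, by the positive constant $4/\sqrt\gamma$; the $\omega_1=0$ case (done correctly) gives $-1/(\gamma^2\omega_0)$ and so confirms your constant, not the printed one. In other words, your method is right and actually exposes what appears to be a misprinted constant in the lemma; since only the sign of $\det[d''(\omega)]$ enters the proof of Theorem \ref{thm1}, nothing downstream is affected, but you should not expect your computation to land literally on the printed expression.
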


Theorem \ref{thm1} follows from Theorems \ref{thm2} and \ref{thm3}, 
Lemma \ref{thm4} and Remark \ref{rem1}. 

\begin{proof}[Proof of Theorem \ref{thm1}]
Let $\omega\in \Omega$. 
If $\omega_1\le 0$, then by Lemma \ref{thm4}, we have 
$Q_1(\phi_{\omega})>0$ and $\det [d''(\omega)]<0$. 
Thus, the matrix $d''(\omega)$ has one positive eigenvalue 
and one negative eigenvalue. 
Therefore, by Theorem \ref{thm2}, $T(\omega t)\phi_{\omega}$ is stable. 

Next, we consider the case $\omega_1>0$. 
We put 
$\displaystyle{\xi=\sqrt{\gamma\left(\frac{4\omega_0}{\omega_1^2}-1\right)}}$. 
Then, by Lemma \ref{thm4}, we have 
$$Q_1(\phi_{\omega})
=\dfrac{1}{\gamma} \sqrt{4\omega_0-\omega_1^2} \, \left\{1-g(\xi)\right\},$$
where $g(\xi)$ is defined by \eqref{def:g} in Remark \ref{rem1}. 

If $g(\xi)<1$, 
then $Q_1(\phi_{\omega})>0$ and $\det [d''(\omega)]<0$. 
Thus, $d''(\omega)$ has a positive eigenvalue, 
and by Theorem \ref{thm2}, $T(\omega t)\phi_{\omega}$ is stable. 

On the other hand, if $g(\xi)>1$, 
then $Q_1(\phi_{\omega})<0$ and $\det [d''(\omega)]>0$. 
Moreover, since 
\begin{align*}
\pd_{\omega_0}^2 d(\omega)
=\pd_{\omega_0} Q_0(\phi_{\omega})
=\frac{-4 \omega_1}{\sqrt{4\omega_0-\omega_1^2}
\{\gamma (4\omega_0-\omega_1^2)+\omega_1^2\}}<0, 
\end{align*}
we see that $d''(\omega)$ is negative definite. 
Thus, it follows from Theorem \ref{thm3} that $T(\omega t)\phi_{\omega}$ is unstable. 

Finally, by Remark \ref{rem1}, 
we see that $g(\xi)<1$ is equivalent to 
$\omega_1<2 \kappa \sqrt{\omega_0}$, 
and that $g(\xi)>1$ is equivalent to 
$\omega_1>2\kappa \sqrt{\omega_0}$. 
\end{proof}

The rest of the paper is organized as follows. 
In Section \ref{sect2}, we give a variational characterization of $\phi_{\omega}$. 
This part is essentially the same as Section 3 of \cite{CO}, 
so we omit the details. 
In Section \ref{sect3}, we give the proof of Theorem \ref{thm3}. 
We divide the proof into two parts. 
In Subsection \ref{sect31}, we prove that 
if $d''(\omega)$ is negative definite,  
then there exists an unstable direction $\psi$. 
In Subsection \ref{sect32}, 
we prove the instability of $T(\omega t) \phi_{\omega}$ 
using the variational characterization of $\phi_{\omega}$ 
and the unstable direction $\psi$. 

\section{Variational characterization}\label{sect2}

In this section, we give a variational characterization of $\phi_{\omega}$. 
Although $\phi_{\omega}$ is given by \eqref{solitary2} and \eqref{solitary3} explicitly, 
we need such a variational characterization to prove stability and instability 
of solitary wave solutions $T(\omega t) \phi_{\omega}$. 

Throughout this section, we assume that $b>0$. 
The case $b=0$ is studied in Section 3 of \cite{CO}, 
and the proof for the case $b>0$ is almost the same as that for $b=0$, 
so we will omit the details. 

For $\omega\in \Omega$, we define 
\begin{align*}
&L_{\omega}(v)
=\|\pd_x v\|_{L^2}^2+\omega_0 \|v\|_{L^2}^2+\omega_1 (i\pd_x v,v)_{L^2}, \\ 
&S_{\omega} (v)
=\frac{1}{2}L_{\omega}(v)
-\frac{1}{4}(i|v|^2\pd_x v,v)_{L^2}-\frac{b}{6}\|v\|_{L^6}^6, \\
&K_{\omega}(v)
=L_{\omega}(v)-(i|v|^2\pd_x v,v)_{L^2}-b\|v\|_{L^6}^6, 
\end{align*}
and consider the following minimization problem:
\begin{equation}\label{mini1}
\mu(\omega)=\inf \{S_{\omega}(v): v\in H^1(\R)\setminus \{0\},~ K_{\omega}(v)=0\}. 
\end{equation}
Note that \eqref{sp1} is equivalent to $S_{\omega}'(\phi)=0$ 
and that $K_{\omega}(v)=\pd_{\lambda} S_{\omega} (\lambda v) |_{\lambda=1}$. 

We also define 
\begin{align*}
\tilde S_{\omega} (v)=S_{\omega} (v)-\frac{1}{4}K_{\omega}(v) 
=\frac{1}{4} L_{\omega}(v)+\frac{b}{12}\|v\|_{L^6}^6. 
\end{align*}

\begin{lemma}
Let $\omega\in \Omega$. 
\begin{description}
\item[{\rm (1)}]\
There exists a constant $C_1=C_1(\omega)>0$ such that 
$L_{\omega}(v)\ge C_1\|v\|_{H^1}^2$ for all $v\in H^1(\R)$. 
\item[{\rm (2)}]\
$\mu (\omega)>0$. 
\item[{\rm (3)}]\
If $v\in H^1(\R)$ satisfies $K_{\omega}(v)<0$, 
then $\mu (\omega)<\tilde S_{\omega}(v)$. 
\end{description}
\end{lemma}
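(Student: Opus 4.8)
The three assertions are largely independent, so I would treat them in turn, and I expect only (2) to carry genuine analytic content. For (1), the plan is to pass to the Fourier side. By Plancherel, writing $\hat v$ for the Fourier transform of $v$, one has $\|\pd_x v\|_{L^2}^2=\int \xi^2|\hat v(\xi)|^2\,d\xi$, $\|v\|_{L^2}^2=\int|\hat v(\xi)|^2\,d\xi$ and $(i\pd_x v,v)_{L^2}=-\int \xi|\hat v(\xi)|^2\,d\xi$, so that $L_{\omega}(v)=\int (\xi^2-\omega_1\xi+\omega_0)|\hat v(\xi)|^2\,d\xi$. The key point is that the symbol $p(\xi)=\xi^2-\omega_1\xi+\omega_0$ has discriminant $\omega_1^2-4\omega_0<0$ because $\omega\in\Omega$, hence $p(\xi)>0$ for every $\xi$. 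Since $p(\xi)/(1+\xi^2)$ is continuous, strictly positive, and tends to $1$ as $|\xi|\to\infty$, it attains a positive minimum $C_1=C_1(\omega)>0$; comparing integrands then gives $L_{\omega}(v)\ge C_1\int(1+\xi^2)|\hat v|^2\,d\xi=C_1\|v\|_{H^1}^2$.

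For (2), I would first observe that on the constraint $K_{\omega}(v)=0$ the two functionals agree, $S_{\omega}(v)=\tilde S_{\omega}(v)=\frac14 L_{\omega}(v)+\frac{b}{12}\|v\|_{L^6}^6\ge \frac{C_1}{4}\|v\|_{H^1}^2$ by (1). It therefore suffices to bound $\|v\|_{H^1}$ away from zero on the constraint set. From $K_{\omega}(v)=0$ and (1), $C_1\|v\|_{H^1}^2\le L_{\omega}(v)=(i|v|^2\pd_x v,v)_{L^2}+b\|v\|_{L^6}^6$. The nonlinear terms I would control by H\"older and the one-dimensional Sobolev embedding $H^1(\R)\hookrightarrow L^6(\R)$: $|(i|v|^2\pd_x v,v)_{L^2}|\le \|v\|_{L^6}^3\|\pd_x v\|_{L^2}\le C\|v\|_{H^1}^4$ and $b\|v\|_{L^6}^6\le C\|v\|_{H^1}^6$. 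Dividing the resulting inequality $C_1\|v\|_{H^1}^2\le C(\|v\|_{H^1}^4+\|v\|_{H^1}^6)$ by $\|v\|_{H^1}^2$ (legitimate since $v\ne 0$) forces $\|v\|_{H^1}\ge\rho$ for some $\rho=\rho(\omega)>0$, whence $\mu(\omega)\ge \frac{C_1}{4}\rho^2>0$.

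For (3), the standard device is the one-parameter rescaling $\lambda\mapsto\lambda v$. Writing $A=(i|v|^2\pd_x v,v)_{L^2}$ and $B=b\|v\|_{L^6}^6\ge 0$ and using the homogeneities (the quadratic $L_{\omega}$, the quartic cubic-derivative term, the sextic $L^6$ term), I set $h(s)=L_{\omega}(v)-sA-s^2B$ for $s\in[0,1]$, so that $K_{\omega}(\sqrt s\,v)=s\,h(s)$. Since $h(0)=L_{\omega}(v)>0$ and $h(1)=K_{\omega}(v)<0$, the intermediate value theorem yields $s_0\in(0,1)$ with $h(s_0)=0$, i.e.\ $K_{\omega}(\lambda_0 v)=0$ for $\lambda_0=\sqrt{s_0}\in(0,1)$. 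By the definition of $\mu$ and the identity $S_{\omega}=\tilde S_{\omega}$ on the constraint, $\mu(\omega)\le \tilde S_{\omega}(\lambda_0 v)=\frac{\lambda_0^2}{4}L_{\omega}(v)+\frac{\lambda_0^6 b}{12}\|v\|_{L^6}^6$; since $\lambda_0\in(0,1)$ and $L_{\omega}(v)>0$, each term is strictly smaller than the corresponding term of $\tilde S_{\omega}(v)$, giving $\mu(\omega)\le \tilde S_{\omega}(\lambda_0 v)<\tilde S_{\omega}(v)$.

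The main obstacle is the lower bound in (2); everything else is algebraic or follows from (1). The only points there requiring care are the correct H\"older pairing for the derivative nonlinearity and checking that the powers of $\|v\|_{H^1}$ on the right are strictly larger than $2$, so that dividing by $\|v\|_{H^1}^2$ genuinely yields a positive lower bound rather than a vacuous inequality. In (3) one should also note that $v\ne 0$ is automatic, since $K_{\omega}(0)=0>K_{\omega}(v)$, and that $B\ge 0$ is exactly what makes $h$ change sign on $(0,1)$.
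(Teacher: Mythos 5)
Your proposal is correct and, for parts (2) and (3), follows essentially the same route as the paper: the Sobolev-plus-homogeneity lower bound forcing $\|v\|_{H^1}$ away from zero on the constraint set (the paper absorbs the quartic term via Young's inequality where you simply divide by $\|v\|_{H^1}^2$, an immaterial difference), and the scaling/intermediate-value argument producing $\lambda_0\in(0,1)$ with $K_{\omega}(\lambda_0 v)=0$. For part (1) the paper gives no argument at all, merely citing Lemma 7(1) of \cite{CO}, so your Fourier-multiplier proof (positivity of the symbol $\xi^2-\omega_1\xi+\omega_0$ on $\Omega$) is a valid self-contained substitute.
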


\begin{proof}
(1) \hspace{1mm} 
See Lemma 7 (1) of \cite{CO}. 

\noindent (2) \hspace{1mm} 
Let $v\in H^1(\R)\setminus \{0\}$ satisfy $K_{\omega}(v)=0$. 
Then, by (1) and the Sobolev inequality, 
there exists $C_2>0$ such that 
\begin{align*}
&C_1 \|v\|_{H^1}^2\le L_{\omega}(v)
=(i|v|^2\pd_x v,v)_{L^2}+b\|v\|_{L^6}^6 \\
&\le \|\pd_x v\|_{L^2}\|v\|_{L^6}^3+b\|v\|_{L^6}^6
\le \frac{C_1}{2}\|v\|_{H^1}^2+C_2 \|v\|_{H^1}^6.
\end{align*}
Since $v\ne 0$, we have $\|v\|_{H^1}^4\ge \frac{C_1}{2C_2}$. 
Thus, we have 
\begin{align*}
&\mu (\omega)
=\inf \{\tilde S_{\omega}(v): v\in H^1(\R)\setminus \{0\},~ K_{\omega}(v)=0\} \\
&\ge \frac{1}{4}\inf \{L_{\omega}(v): v\in H^1(\R)\setminus \{0\},~ K_{\omega}(v)=0\}
\ge \frac{C_1}{4}\sqrt{\frac{C_1}{2C_2}}>0. 
\end{align*}

\noindent (3) \hspace{1mm} 
Let $v\in H^1(\R)\setminus \{0\}$ satisfy $K_{\omega}(v)<0$. 
Then, there exists $\lambda_1\in (0,1)$ such that 
$$K_{\omega}(\lambda_1 v)
=\lambda_1^2 L_{\omega}(v)
-\lambda_1^4 (i|v|^2\pd_x v,v)_{L^2}-\lambda_1^6 b\|v\|_{L^6}^6=0.$$
Since $v\ne 0$, we have 
$$\mu (\omega)\le \tilde S_{\omega} (\lambda_1 v)
=\frac{\lambda_1^2}{4}L_{\omega}(v)+\frac{\lambda_1^6b}{12}\|v\|_{L^6}^6
<\tilde S_{\omega} (v).$$
This completes the proof. 
\end{proof}

Let $\mathcal{M}_{\omega}$ be the set of all minimizers for \eqref{mini1}, i.e., 
\begin{align*}
\mathcal{M}_{\omega}
=\{\varphi \in H^1(\R)\setminus \{0\}: 
S_{\omega}(\varphi)=\mu (\omega),~ K_{\omega}(\varphi)=0\}. 
\end{align*}
Then, we obtain the following. 

\begin{lemma} \label{lem:vc1}
For any $\omega\in \Omega$, 
we have $\mathcal{M}_{\omega}
=\{T(\theta) \phi_{\omega}: \theta\in \R^2\}$. 
In particular, 
if $v\in H^1(\R)$ satisfies $K_{\omega}(v)=0$ and $v\ne 0$, 
then $S_{\omega}(\phi_{\omega})\le S_{\omega}(v)$. 
\end{lemma}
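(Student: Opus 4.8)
The plan is to prove Lemma~\ref{lem:vc1} by establishing the variational characterization $\mathcal{M}_{\omega}=\{T(\theta)\phi_{\omega}:\theta\in\R^2\}$ through the standard concentration–compactness / profile decomposition machinery, following the template of Section~3 of \cite{CO}. First I would show that the infimum $\mu(\omega)$ is attained: take a minimizing sequence $(v_n)\subset H^1(\R)\setminus\{0\}$ with $K_{\omega}(v_n)=0$ and $S_{\omega}(v_n)\to\mu(\omega)$. Since $S_{\omega}(v_n)=\tilde S_{\omega}(v_n)=\tfrac14 L_{\omega}(v_n)+\tfrac{b}{12}\|v_n\|_{L^6}^6$ on the constraint, part~(1) of the previous lemma gives a uniform $H^1$ bound, so $(v_n)$ is bounded in $H^1(\R)$. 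The key step is to rule out vanishing and dichotomy: the lower bound $\|v_n\|_{H^1}^4\ge C_1/(2C_2)$ from part~(2) prevents the mass from escaping to zero, and one uses the translation invariance $T(0,\theta_1)$ together with a concentration–compactness argument to extract, after translating, a subsequence converging weakly to some $\varphi\neq 0$.

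Next I would verify that the weak limit $\varphi$ is an actual minimizer. The constraint functional $K_{\omega}$ and the objective $S_{\omega}$ behave well under weak convergence because the nonlinear terms $(i|v|^2\pd_x v,v)_{L^2}$ and $\|v\|_{L^6}^6$ are compact relative to the quadratic form $L_{\omega}$ on bounded sequences after translation. One shows $K_{\omega}(\varphi)\le 0$; if $K_{\omega}(\varphi)<0$, then part~(3) of the previous lemma yields $\mu(\omega)<\tilde S_{\omega}(\varphi)\le\liminf\tilde S_{\omega}(v_n)=\mu(\omega)$, a contradiction, forcing $K_{\omega}(\varphi)=0$. Weak lower semicontinuity of $\tilde S_{\omega}$ then gives $S_{\omega}(\varphi)=\tilde S_{\omega}(\varphi)\le\mu(\omega)$, so $\varphi\in\mathcal{M}_{\omega}$ and the convergence is in fact strong in $H^1$.

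Then I would use a Lagrange multiplier argument to identify any minimizer $\varphi$ with a solitary profile. Since $\varphi$ minimizes $S_{\omega}$ under the single constraint $K_{\omega}=0$, there is $\Lambda\in\R$ with $S_{\omega}'(\varphi)=\Lambda K_{\omega}'(\varphi)$; pairing with $\varphi$ and using $K_{\omega}(\varphi)=0$ together with the scaling identity $K_{\omega}(v)=\pd_{\lambda}S_{\omega}(\lambda v)|_{\lambda=1}$ shows $\Lambda=0$, so $S_{\omega}'(\varphi)=0$, i.e.\ $\varphi$ solves \eqref{sp1}. The remaining task is to prove that every $H^1$ solution of \eqref{sp1} lying in $\mathcal{M}_{\omega}$ is of the form $T(\theta)\phi_{\omega}$: one reduces \eqref{sp1} to \eqref{sp2} via the gauge/phase substitution \eqref{solitary2}, and then classifies the positive decaying solutions of the real ODE \eqref{sp2}, which are unique up to translation and phase, yielding exactly the orbit of $\phi_{\omega}$.

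The main obstacle I expect is the compactness step: ensuring that after translating the minimizing sequence one genuinely recovers a nonzero weak limit and excludes splitting of mass (dichotomy). This is where the sign structure of the derivative nonlinearity $(i|v|^2\pd_x v,v)_{L^2}$ and the coercivity from part~(1) must be combined carefully, since $L_{\omega}$ controls the $H^1$ norm but the cubic derivative term is only borderline-subcritical. Because this is precisely the argument carried out in \cite{CO} for $b=0$ and the additional $b\|v\|_{L^6}^6$ term is energetically favorable (it appears with a good sign in $\tilde S_{\omega}$ and is subcritical), the adaptation is routine and I would, as the authors indicate, omit the repetitive details and cite Section~3 of \cite{CO} for the concentration–compactness and ODE-classification steps.
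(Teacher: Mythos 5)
Your proposal is correct and takes essentially the same approach as the paper: the paper omits the proof entirely, stating that it is almost the same as that of Lemma 10 of \cite{CO}, and your outline (bounded minimizing sequence on the constraint, concentration--compactness after translation, zero Lagrange multiplier via the scaling identity $K_{\omega}(v)=\pd_{\lambda}S_{\omega}(\lambda v)|_{\lambda=1}$, and classification of solutions of \eqref{sp2} up to translation and phase) is precisely the argument of Section 3 of \cite{CO} adapted to $b>0$. Your decision to defer the repetitive compactness and ODE details to \cite{CO}, noting that the extra term $b\|v\|_{L^6}^6$ enters with a favorable sign, matches what the paper itself does.
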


The proof of Lemma \ref{lem:vc1} 
is almost the same as that of Lemma 10 of \cite{CO}, 
so we omit it. 

The following lemma plays an important role in the proof of Lemma \ref{lem4.2}. 

\begin{lemma} \label{lem:vc2}
If $v\in H^1(\R)$ satisfies $\dual{K_{\omega}'(\phi_{\omega})}{v}=0$, 
then $\dual{S_{\omega}''(\phi_{\omega}) v}{v}\ge 0$. 
\end{lemma}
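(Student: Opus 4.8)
The plan is to read this as the second-order necessary condition for the constrained minimization problem \eqref{mini1}, for which $\phi_{\omega}$ is a minimizer by Lemma \ref{lem:vc1}. The starting point is the identity $K_{\omega}(u)=\dual{S_{\omega}'(u)}{u}$, which is immediate from $K_{\omega}(u)=\pd_{\lambda} S_{\omega}(\lambda u)|_{\lambda=1}$. Differentiating this identity in an arbitrary direction $w$ and using $S_{\omega}'(\phi_{\omega})=0$ gives
\[
\dual{K_{\omega}'(\phi_{\omega})}{w}=\dual{S_{\omega}''(\phi_{\omega})\phi_{\omega}}{w}, \quad w\in H^1(\R).
\]
In particular the hypothesis $\dual{K_{\omega}'(\phi_{\omega})}{v}=0$ says precisely that $\dual{S_{\omega}''(\phi_{\omega})\phi_{\omega}}{v}=0$, i.e.\ the cross term between $v$ and $\phi_{\omega}$ in the quadratic form $S_{\omega}''(\phi_{\omega})$ vanishes. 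This is the algebraic fact that will make the argument close.

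Next I would construct a curve lying on the constraint set $\{K_{\omega}=0\}$ and tangent to $v$ at $\phi_{\omega}$. Taking $w=\phi_{\omega}$ in the identity above and using $K_{\omega}(\phi_{\omega})=0$ to eliminate the cubic term, a direct computation yields
\[
\dual{K_{\omega}'(\phi_{\omega})}{\phi_{\omega}}=-2L_{\omega}(\phi_{\omega})-2b\|\phi_{\omega}\|_{L^6}^6<0,
\]
where $L_{\omega}(\phi_{\omega})>0$ by coercivity of $L_{\omega}$ and $\phi_{\omega}\neq0$. Since this quantity equals $\pd_{\lambda}K_{\omega}(\lambda\phi_{\omega})|_{\lambda=1}$ and is nonzero, the implicit function theorem produces a $C^2$ function $\lambda(s)$ with $\lambda(0)=1$ such that $\psi(s):=\lambda(s)(\phi_{\omega}+sv)$ satisfies $K_{\omega}(\psi(s))=0$ for all small $s$, with $\psi(0)=\phi_{\omega}$ and $\psi'(0)=\lambda'(0)\phi_{\omega}+v$.

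Because $\psi(s)$ stays on the constraint set, Lemma \ref{lem:vc1} gives $S_{\omega}(\psi(s))\ge S_{\omega}(\phi_{\omega})$ for all small $s$, so $s=0$ minimizes $s\mapsto S_{\omega}(\psi(s))$ and hence its second derivative at $s=0$ is nonnegative. Since $S_{\omega}'(\phi_{\omega})=0$, the term involving $\psi''(0)$ drops out and this second derivative equals $\dual{S_{\omega}''(\phi_{\omega})\psi'(0)}{\psi'(0)}$. Expanding with $\psi'(0)=\lambda'(0)\phi_{\omega}+v$ and using that the cross term $\dual{S_{\omega}''(\phi_{\omega})\phi_{\omega}}{v}=0$ vanishes by hypothesis, I obtain
\[
0\le \lambda'(0)^2\,\dual{S_{\omega}''(\phi_{\omega})\phi_{\omega}}{\phi_{\omega}}+\dual{S_{\omega}''(\phi_{\omega})v}{v}.
\]
Finally, since $\dual{S_{\omega}''(\phi_{\omega})\phi_{\omega}}{\phi_{\omega}}=\dual{K_{\omega}'(\phi_{\omega})}{\phi_{\omega}}<0$, the first term on the right is $\le0$, which forces $\dual{S_{\omega}''(\phi_{\omega})v}{v}\ge0$, as claimed.

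The main obstacle is the evaluation of $\dual{K_{\omega}'(\phi_{\omega})}{\phi_{\omega}}$ and the determination of its sign, because this single quantity does double duty: its nonvanishing is exactly what makes the implicit function theorem applicable for constructing the constrained curve $\psi(s)$, and its strict negativity is exactly what converts the constrained minimality into the desired inequality. Both uses rely essentially on $b>0$, through the strictly positive contribution $b\|\phi_{\omega}\|_{L^6}^6$; once the curve $\psi(s)$ is in hand, the remaining differentiation and sign bookkeeping are routine.
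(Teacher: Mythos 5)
Your proof is correct and follows essentially the same route as the paper's: an implicit-function-theorem curve inside the constraint set $\{K_{\omega}=0\}$ through $\phi_{\omega}$, corrected in the $\phi_{\omega}$-direction, combined with the constrained minimality from Lemma \ref{lem:vc1} and the second-derivative test. The differences are minor: the paper uses the additive curve $\phi_{\omega}+sv+\gamma(s)\phi_{\omega}$ and shows $\gamma'(0)=0$ so that the correction drops out exactly, whereas you use the multiplicative curve $\lambda(s)(\phi_{\omega}+sv)$ and instead absorb the surviving term $\lambda'(0)^2\dual{S_{\omega}''(\phi_{\omega})\phi_{\omega}}{\phi_{\omega}}\le 0$ by its sign (differentiating your constraint at $s=0$ would in fact give $\lambda'(0)=0$, recovering the paper's bookkeeping); as a bonus, your explicit computation $\dual{K_{\omega}'(\phi_{\omega})}{\phi_{\omega}}=-2L_{\omega}(\phi_{\omega})-2b\|\phi_{\omega}\|_{L^6}^6<0$ justifies the nondegeneracy hypothesis for the implicit function theorem, which the paper asserts without proof---though note this negativity comes from $L_{\omega}(\phi_{\omega})>0$ alone, so it does not actually rely on $b>0$ as your closing remark suggests.
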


\begin{proof}
Let $v\in H^1(\R)$ satisfy $\dual{K_{\omega}'(\phi_{\omega})}{v}=0$. 
Since $K_{\omega}(\phi_{\omega})=0$ and 
$\dual{K_{\omega}'(\phi_{\omega})}{\phi_{\omega}}\ne 0$, 
by the implicit function theorem, 
there exist a constant $\delta>0$ and 
a $C^2$-function $\gamma:(-\delta,\delta)\to \R$ such that 
$\gamma (0)=0$ and 
\begin{equation}\label{vc3}
K_{\omega}(\phi_{\omega}+s v+\gamma (s) \phi_{\omega})=0, 
\quad s\in (-\delta,\delta). 
\end{equation}
Taking $\delta$ smaller if necessary, 
we also have $\phi_{\omega}+s v+\gamma (s) \phi_{\omega}\ne 0$ 
for $s\in (-\delta,\delta)$.

Differentiating \eqref{vc3} at $s=0$, we have 
$$0=\dual{K_{\omega}'(\phi_{\omega})}{v}
+\gamma'(0) \dual{K_{\omega}'(\phi_{\omega})}{\phi_{\omega}}.$$
Since $\dual{K_{\omega}'(\phi_{\omega})}{v}=0$ and 
$\dual{K_{\omega}'(\phi_{\omega})}{\phi_{\omega}}\ne 0$, 
we have $\gamma'(0)=0$. 

Moreover, since $\phi_{\omega}\in \mathcal{M}_{\omega}$ by Lemma \ref{lem:vc1}, 
it follows from \eqref{vc3} that the function 
$s\mapsto S_{\omega}(\phi_{\omega}+s v+\gamma (s) \phi_{\omega})$ has a 
local minimum at $s=0$. 
Thus, we have 
\begin{align*}
0
&\le \frac{d^2}{ds^2} S_{\omega}(\phi_{\omega}+s v+\gamma (s) \phi_{\omega})\big|_{s=0} \\
&=\dual{S_{\omega}''(\phi_{\omega})(v+\gamma'(0) \phi_{\omega})}{v+\gamma'(0) \phi_{\omega}}
+\dual{S_{\omega}'(\phi_{\omega})}{\gamma''(0)\phi_{\omega}} \\
&=\dual{S_{\omega}''(\phi_{\omega})v}{v}. 
\end{align*}
This completes the proof. 
\end{proof}

\section{Proof of Theorem \ref{thm3}}\label{sect3}

In this section, we give the proof of Theorem \ref{thm3}. 
We divide the proof into two parts. 
In Subsection \ref{sect31}, we prove that 
if $d''(\omega)$ is negative definite,  
then there exists an unstable direction $\psi$ 
(see Lemma \ref{lem2.3}). 
In Subsection \ref{sect32}, 
we prove the instability of $T(\omega t) \phi_{\omega}$ 
using the variational characterization of $\phi_{\omega}$ 
and the unstable direction $\psi$ (see Proposition \ref{prop1}). 
Theorem \ref{thm3} follows from Lemma \ref{lem2.3} and Proposition \ref{prop1}. 

\subsection{Existence of unstable direction}\label{sect31}

\begin{lemma}\label{lem2.2}
$\dual{S_{\omega}''(\phi_{\omega})\phi_{\omega}}{\phi_{\omega}}<0$. 
\end{lemma}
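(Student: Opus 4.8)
The plan is to exploit the scaling structure of $S_{\omega}$ together with the constraint $K_{\omega}(\phi_{\omega})=0$. Set $f(\lambda)=S_{\omega}(\lambda\phi_{\omega})$ for $\lambda>0$. Since $S_{\omega}$ is twice differentiable, $f''(1)=\dual{S_{\omega}''(\phi_{\omega})\phi_{\omega}}{\phi_{\omega}}$, so it suffices to compute $f''(1)$ and show it is negative. First I would use the homogeneity of the three terms defining $S_{\omega}$: the quadratic form $L_{\omega}$ scales like $\lambda^2$, the term $(i|v|^2\partial_x v,v)_{L^2}$ scales like $\lambda^4$, and $\|v\|_{L^6}^6$ scales like $\lambda^6$. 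Writing $A=(i|\phi_{\omega}|^2\partial_x\phi_{\omega},\phi_{\omega})_{L^2}$ and $B=\|\phi_{\omega}\|_{L^6}^6$, this gives the explicit polynomial
$$f(\lambda)=\frac{\lambda^2}{2}L_{\omega}(\phi_{\omega})-\frac{\lambda^4}{4}A-\frac{\lambda^6 b}{6}B,$$
and differentiating twice yields $f''(1)=L_{\omega}(\phi_{\omega})-3A-5bB$.

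Next I would eliminate $L_{\omega}(\phi_{\omega})$ using the constraint. By the identity $K_{\omega}(v)=\pd_{\lambda}S_{\omega}(\lambda v)|_{\lambda=1}$ recorded in Section \ref{sect2}, taking $v=\phi_{\omega}$ gives $f'(1)=K_{\omega}(\phi_{\omega})$; and since $\phi_{\omega}\in\mathcal{M}_{\omega}$ by Lemma \ref{lem:vc1}, we have $K_{\omega}(\phi_{\omega})=0$, i.e.\ $L_{\omega}(\phi_{\omega})=A+bB$. Substituting this into $f''(1)$ and then substituting $A=L_{\omega}(\phi_{\omega})-bB$ once more, I obtain
$$\dual{S_{\omega}''(\phi_{\omega})\phi_{\omega}}{\phi_{\omega}}=f''(1)=(A+bB)-3A-5bB=-2A-4bB=-2\bigl(L_{\omega}(\phi_{\omega})+bB\bigr).$$

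Finally I would conclude by positivity: the coercivity estimate $L_{\omega}(\phi_{\omega})\ge C_1\|\phi_{\omega}\|_{H^1}^2>0$ established in Section \ref{sect2}, together with $b>0$ and $B=\|\phi_{\omega}\|_{L^6}^6>0$, makes the right-hand side strictly negative. The only delicate point is that the sign of $A$ cannot be controlled on its own, so one cannot conclude directly from the expression $-2A-4bB$; the key maneuver, and essentially the whole content of the argument, is the regrouping $-2A-4bB=-2(L_{\omega}(\phi_{\omega})+bB)$ made possible by the constraint $K_{\omega}(\phi_{\omega})=0$, which trades the indefinite quantity $A$ for the manifestly positive $L_{\omega}(\phi_{\omega})$.
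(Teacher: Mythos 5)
Your proof is correct and follows essentially the same route as the paper: both reduce the claim to computing $\frac{d^2}{d\lambda^2}S_{\omega}(\lambda\phi_{\omega})\big|_{\lambda=1}$ for the explicit polynomial in $\lambda$, using $K_{\omega}(\phi_{\omega})=0$. In fact your explicit regrouping $f''(1)=-2\bigl(L_{\omega}(\phi_{\omega})+b\|\phi_{\omega}\|_{L^6}^6\bigr)<0$ supplies the detail the paper leaves implicit behind its assertion that $\lambda=1$ is a strict local maximum (which by itself would only give $f''(1)\le 0$).
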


\begin{proof}
Since the function 
\begin{align*}
(0,\infty)\ni \lambda \mapsto S_{\omega}(\lambda \phi_{\omega})
=\frac{\lambda^2}{2} L_{\omega}(\phi_{\omega})
-\frac{\lambda^4}{4}(i|\phi_{\omega}|^2\pd_x \phi_{\omega},\phi_{\omega})_{L^2}
-\frac{\lambda^6\,b}{6}\|\phi_{\omega}\|_{L^6}^6
\end{align*}
has a strictly local maximum at $\lambda=1$, 
we have 
\begin{align*}
0>\frac{d^2}{d \lambda^2} S_{\omega}(\lambda \phi_{\omega})\big|_{\lambda=1} 
=\dual{S_{\omega}''(\phi_{\omega})\phi_{\omega}}{\phi_{\omega}}. 
\end{align*}
This completes the proof. 
\end{proof}

\begin{lemma}\label{lem2.3}
Assume that $d''(\hat \omega)$ is negative definite. 
Then there exists $\psi\in H^1(\R)$ such that 
$$\dual{Q_0'(\phi_{\hat \omega})}{\psi}=\dual{Q_1'(\phi_{\hat \omega})}{\psi}=0,
\quad \dual{S_{\hat \omega}''(\phi_{\hat \omega})\psi}{\psi}<0.$$
\end{lemma}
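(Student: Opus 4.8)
The plan is to build $\psi$ as an explicit combination of $\phi_{\hat\omega}$ and the derivatives $\pd_{\omega_j}\phi_{\hat\omega}$, exploiting the fact that $d''(\hat\omega)$ records exactly the pairings of $Q_j'(\phi_{\hat\omega})$ against these derivatives. First I would set down the basic differential identities. Since $\phi_\omega$ solves $S_\omega'(\phi_\omega)=0$ for all $\omega\in\Omega$ and $S_\omega=E+\omega_0Q_0+\omega_1Q_1$, differentiating in $\omega_j$ (using that $\omega\mapsto\phi_\omega\in H^1(\R)$ is $C^1$, which follows from the explicit formulas \eqref{solitary2}--\eqref{solitary3}) gives the key relation
\[
S_{\hat\omega}''(\phi_{\hat\omega})\,\pd_{\omega_j}\phi_{\hat\omega}=-Q_j'(\phi_{\hat\omega}),\qquad j=0,1.
\]
Pairing this against $\pd_{\omega_k}\phi_{\hat\omega}$ and comparing with the chain-rule identity $\dual{Q_j'(\phi_{\hat\omega})}{\pd_{\omega_k}\phi_{\hat\omega}}=\pd_{\omega_k}Q_j(\phi_{\hat\omega})$ shows that $\dual{Q_j'(\phi_{\hat\omega})}{\pd_{\omega_k}\phi_{\hat\omega}}$ is precisely the $(j,k)$ entry of $d''(\hat\omega)$.

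With these in hand, I would look for $\psi$ of the form $\psi=\phi_{\hat\omega}+a_0\,\pd_{\omega_0}\phi_{\hat\omega}+a_1\,\pd_{\omega_1}\phi_{\hat\omega}$ and choose $a=(a_0,a_1)$ to enforce the two orthogonality conditions. Writing $b_k=\dual{Q_k'(\phi_{\hat\omega})}{\phi_{\hat\omega}}$, the conditions $\dual{Q_k'(\phi_{\hat\omega})}{\psi}=0$ for $k=0,1$ become the linear system $d''(\hat\omega)\,a=-b$. Because $d''(\hat\omega)$ is negative definite it is invertible, so this system has the unique solution $a=-d''(\hat\omega)^{-1}b$, and the required $\psi$ is well defined.

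It then remains to evaluate the quadratic form. Writing $H=S_{\hat\omega}''(\phi_{\hat\omega})$ and using the key relation, $H\psi=H\phi_{\hat\omega}-\sum_j a_jQ_j'(\phi_{\hat\omega})$; pairing with $\psi$ and invoking the orthogonality conditions kills the sum, leaving $\dual{H\psi}{\psi}=\dual{H\phi_{\hat\omega}}{\psi}$. Expanding this and using the self-adjointness of $H$ together with $\dual{H\phi_{\hat\omega}}{\pd_{\omega_j}\phi_{\hat\omega}}=\dual{\phi_{\hat\omega}}{H\pd_{\omega_j}\phi_{\hat\omega}}=-b_j$, I expect to obtain
\[
\dual{S_{\hat\omega}''(\phi_{\hat\omega})\psi}{\psi}
=\dual{S_{\hat\omega}''(\phi_{\hat\omega})\phi_{\hat\omega}}{\phi_{\hat\omega}}+b^{\mathsf{T}}d''(\hat\omega)^{-1}b.
\]
Since $d''(\hat\omega)$ is negative definite, so is its inverse, whence $b^{\mathsf{T}}d''(\hat\omega)^{-1}b\le 0$; combined with Lemma \ref{lem2.2}, which gives $\dual{S_{\hat\omega}''(\phi_{\hat\omega})\phi_{\hat\omega}}{\phi_{\hat\omega}}<0$, this yields $\dual{S_{\hat\omega}''(\phi_{\hat\omega})\psi}{\psi}<0$, as required (and in particular $\psi\neq 0$).

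The main obstacle is not the algebra, which is essentially forced, but justifying the differential identity: one needs $\omega\mapsto\phi_\omega$ to be continuously differentiable into $H^1(\R)$ so that $\pd_{\omega_j}\phi_\omega$ is meaningful and so that differentiating $S_\omega'(\phi_\omega)=0$ is legitimate. Here this is straightforward from the closed-form expressions \eqref{solitary2}--\eqref{solitary3}. The only structural input beyond this is the negative definiteness of $d''(\hat\omega)$, which enters solely to guarantee invertibility of the linear system and the sign $b^{\mathsf{T}}d''(\hat\omega)^{-1}b\le 0$; notably the argument still succeeds when $b=0$, since then the quadratic form reduces to $\dual{S_{\hat\omega}''(\phi_{\hat\omega})\phi_{\hat\omega}}{\phi_{\hat\omega}}<0$.
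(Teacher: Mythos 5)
Your argument is correct, and it produces exactly the same vector $\psi$ as the paper, but by a more elementary route. The paper never writes down your linear system explicitly: instead it applies the implicit function theorem to the map $F(s,\omega)$ with components $Q_j(s\phi_{\hat\omega}+\phi_{\omega})-Q_j(\phi_{\hat\omega})$, using negative definiteness of $d''(\hat\omega)=D_{\omega}F(0,\hat\omega)$ only through its invertibility, to obtain a $C^1$ curve $\gamma(s)$ with $\gamma(0)=\hat\omega$ along which both charges stay equal to their values at $\phi_{\hat\omega}$; it then sets $\psi=\pd_s(s\phi_{\hat\omega}+\phi_{\gamma(s)})|_{s=0}=\phi_{\hat\omega}+\sum_{j}\gamma_j'(0)w_j$, and the two orthogonality relations come for free by differentiating the conserved quantities along the curve. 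Differentiating $F(s,\gamma(s))=0$ at $s=0$ gives $d''(\hat\omega)\gamma'(0)=-b$ with your vector $b$, so $\gamma'(0)$ is precisely your $a$ and the two candidates for $\psi$ coincide. From there the proofs merge: both rest on the differentiated stationarity identity $S_{\hat\omega}''(\phi_{\hat\omega})w_j=-Q_j'(\phi_{\hat\omega})$ (the paper's \eqref{Qs2}), the symmetry of $S_{\hat\omega}''(\phi_{\hat\omega})$, and Lemma \ref{lem2.2}, and the final bounds agree because $\gamma'(0)^{\mathsf T}d''(\hat\omega)\,\gamma'(0)=b^{\mathsf T}d''(\hat\omega)^{-1}b\le 0$. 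What your version buys is economy: since only the tangent vector at $s=0$ is ever used, the implicit function theorem is dispensable overhead, and invertibility of the $2\times 2$ Hessian is all that is needed to define $\psi$. What the paper's version buys is geometric packaging: its $\psi$ is by construction tangent to the set where $Q_0$ and $Q_1$ are both constant, which is exactly the property fed into Proposition \ref{prop1}, and the curve formulation makes that visible without imposing the orthogonality by hand. Your closing observations are also sound: negative definiteness enters twice (invertibility and the sign of $b^{\mathsf T}d''(\hat\omega)^{-1}b$), and the conclusion survives in the degenerate case $b=0$ by Lemma \ref{lem2.2} alone.
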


\begin{proof}
For $(s,\omega)$ near $(0, \hat \omega)$ in $\R\times \Omega$, 
we define 
$$F(s,\omega):=
\left[\begin{array}{r}
Q_0(s \phi_{\hat \omega}+\phi_{\omega})-Q_0(\phi_{\hat \omega}) \\
Q_1(s \phi_{\hat \omega}+\phi_{\omega})-Q_1(\phi_{\hat \omega})
\end{array}\right].$$
Then, we have $F(0, \hat \omega)=0$. 
Moreover, since $D_{\omega}F(0,\hat \omega)=d''(\hat \omega)$ 
is negative definite and invertible, 
by the implicit function theorem, there exist 
a constant $\delta>0$ and a $C^1$-function 
$\gamma:(-\delta,\delta)\to \Omega$ such that $\gamma(0)=\hat \omega$ and 
$$Q_0(s \phi_{\hat \omega}+\phi_{\gamma(s)})=Q_0(\phi_{\hat \omega}), \quad 
Q_1(s \phi_{\hat \omega}+\phi_{\gamma(s)})=Q_1(\phi_{\hat \omega})$$
for $s\in (-\delta,\delta)$. 
We define $\varphi_s:=s \phi_{\hat \omega}+\phi_{\gamma(s)}$ for $s\in (-\delta,\delta)$, and 
$$w_j:=\pd_{\omega_j}\phi_{\omega}|_{\omega=\hat \omega} \quad (j=0,1), \quad 
\psi:=\pd_s \varphi_s|_{s=0}=\phi_{\hat \omega}+\sum_{j=0}^{1}\gamma_j'(0) w_j.$$
Then, for $j=0,1$, we have 
\begin{align}
0&=\frac{d}{ds}Q_j(\varphi_s)|_{s=0}
=\dual{Q_j'(\phi_{\hat \omega})}{\psi} 
\label{Qs1} \\
&=\dual{Q_j'(\phi_{\hat \omega})}{\phi_{\hat \omega}}
+\sum_{k=0}^{1}\gamma_k'(0)
\dual{Q_j'(\phi_{\hat \omega})}{w_k}. 
\nonumber
\end{align}
Moreover, differentiating
$$0=S_{\omega}'(\phi_{\omega})
=E'(\phi_{\omega})+\sum_{k=0}^{1}\omega_k Q_k'(\phi_{\omega}),$$
with respect to $\omega_j$ for $j=0,1$, we have  
\begin{align}
0
&=E''(\phi_{\omega})(\pd_{\omega_j} \phi_{\omega})
+\sum_{k=0}^{1}\omega_k Q_k''(\phi_{\omega})(\pd_{\omega_j} \phi_{\omega})
+Q_j'(\phi_{\omega}) 
\label{Qs2} \\
&=S_{\omega}''(\phi_{\omega})(\pd_{\omega_j} \phi_{\omega})+Q_j'(\phi_{\omega}). 
\nonumber 
\end{align}
By \eqref{Qs1} and \eqref{Qs2}, we have 
\begin{align*}
&\dual{S_{\hat \omega}''(\phi_{\hat \omega})\psi}{\psi} 
=\dual{S_{\hat \omega}''(\phi_{\hat \omega})\phi_{\hat \omega}}{\phi_{\hat \omega}} 
+2\sum_{j=0}^{1} \gamma_j'(0) 
\dual{S_{\hat \omega}''(\phi_{\hat \omega})w_j}{\phi_{\hat \omega}} \\ 
&\hspace{65mm}
+\sum_{j,k=0}^{1} \gamma_j'(0) \gamma_k'(0)
\dual{S_{\hat \omega}''(\phi_{\hat \omega})w_j}{w_k} \\
&=\dual{S_{\hat \omega}''(\phi_{\hat \omega})\phi_{\hat \omega}}{\phi_{\hat \omega}}
-2\sum_{j=0}^{1} \gamma_j'(0) \dual{Q_j'(\phi_{\hat \omega})}{\phi_{\hat \omega}} 
-\sum_{j,k=0}^{1} \gamma_j'(0) \gamma_k'(0)
\dual{Q_j'(\phi_{\hat \omega})}{w_k} \\
&=\dual{S_{\hat \omega}''(\phi_{\hat \omega})\phi_{\hat \omega}}{\phi_{\hat \omega}}
+\sum_{j,k=0}^{1} \gamma_j'(0) \gamma_k'(0)
\dual{Q_j'(\phi_{\hat \omega})}{w_k} \\
&=\dual{S_{\hat \omega}''(\phi_{\hat \omega})\phi_{\hat \omega}}{\phi_{\hat \omega}}
+\sum_{j,k=0}^{1} \gamma_j'(0) \gamma_k'(0)\,
\pd_{\omega_j} \pd_{\omega_k} d(\hat \omega).
\end{align*}
Since $d''(\hat \omega)$ is negative definite, it follows from Lemma \ref{lem2.2} that  
$$\dual{S_{\hat \omega}''(\phi_{\hat \omega})\psi}{\psi}
\le \dual{S_{\hat \omega}''(\phi_{\hat \omega})\phi_{\hat \omega}}{\phi_{\hat \omega}}<0.$$
This completes the proof. 
\end{proof}

\subsection{Proof of instability}\label{sect32}

In this subsection, we prove the following. 

\begin{proposition}\label{prop1}
Let $\omega\in \Omega$, 
and assume that there exists $\psi\in H^1(\R)$ such that 
\begin{equation}\label{ass:psi}
\dual{Q_0'(\phi_{\omega})}{\psi}=\dual{Q_1'(\phi_{\omega})}{\psi}=0,
\quad \dual{S_{\omega}''(\phi_{\omega})\psi}{\psi}<0.
\end{equation}
Then, the solitary wave solution $T(\omega t)\phi_{\omega}$ of \eqref{dnls} is unstable. 
\end{proposition}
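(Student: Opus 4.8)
The plan is to argue by contradiction in the rotating frame, combining the variational characterization of Section~\ref{sect2} with a Grillakis--Shatah--Strauss type escape functional. First I would reduce to an autonomous problem: setting $v(t)=T(-\omega t)u(t)$ turns \eqref{dnls} into $i\pd_t v=S_{\omega}'(v)$, for which $\phi_{\omega}$ is an equilibrium and $S_{\omega},Q_0,Q_1$ are conserved (using \eqref{T1} and the $T$-invariance of $Q_0,Q_1,K_{\omega}$). Since $U_{\eps}(\phi_{\omega})$ is $T$-invariant, stability of $T(\omega t)\phi_{\omega}$ is equivalent to orbital stability of $\phi_{\omega}$ for this flow, so I assume the latter and look for a contradiction.

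Next I would trap the solution in the region $\{K_{\omega}<0\}$. Note first that $\dual{K_{\omega}'(\phi_{\omega})}{\psi}\ne0$, since otherwise Lemma~\ref{lem:vc2} would give $\dual{S_{\omega}''(\phi_{\omega})\psi}{\psi}\ge0$, contradicting \eqref{ass:psi}; replacing $\psi$ by $-\psi$ if necessary I may assume $K_{\omega}(\phi_{\omega}+\lambda\psi)<0$ for small $\lambda>0$. Because $S_{\omega}'(\phi_{\omega})=0$, a second order Taylor expansion gives
$$S_{\omega}(\phi_{\omega}+\lambda\psi)=d(\omega)+\frac{\lambda^2}{2}\dual{S_{\omega}''(\phi_{\omega})\psi}{\psi}+o(\lambda^2)<d(\omega)$$
for small $\lambda>0$. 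Fixing such a $\lambda$ and setting $v_0=\phi_{\omega}+\lambda\psi$, conservation of $S_{\omega}$ gives $S_{\omega}(v(t))=S_{\omega}(v_0)<d(\omega)=\mu(\omega)$ for all $t$, while conservation of $Q_0$ keeps $v(t)\ne0$. If $K_{\omega}(v(t_*))=0$ at some first time $t_*$, then Lemma~\ref{lem:vc1} would force $S_{\omega}(v(t_*))\ge\mu(\omega)$, a contradiction; hence $K_{\omega}(v(t))<0$ for all $t$. Part~(3) of the minimization lemma of Section~\ref{sect2} then yields $\tilde S_{\omega}(v(t))>\mu(\omega)$, so that
$$-\tfrac14 K_{\omega}(v(t))=\tilde S_{\omega}(v(t))-S_{\omega}(v(t))>d(\omega)-S_{\omega}(v_0)=:\delta_0>0,$$
i.e. $K_{\omega}(v(t))\le-4\delta_0$ uniformly in $t$.

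The last and hardest step is to convert this uniform negativity of $K_{\omega}$ into escape from the tube. Assuming $v(t)$ remains in $U_{\eps}(\phi_{\omega})$, I would modulate by minimizing $\theta\mapsto\|T(-\theta)v(t)-\phi_{\omega}\|_{H^1}$, writing $v(t)=T(\theta(t))(\phi_{\omega}+w(t))$ with $w(t)$ small and orthogonal to the orbit tangent space $\mathrm{span}\{i\phi_{\omega},\pd_x\phi_{\omega}\}$, which lies in $\ker S_{\omega}''(\phi_{\omega})$; here the orthogonality conditions in \eqref{ass:psi} are what keep $\psi$, and hence $w$, tangent to the manifold of fixed charges. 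Since $\tfrac12\dual{S_{\omega}''(\phi_{\omega})w(t)}{w(t)}\approx S_{\omega}(v(t))-d(\omega)<-\delta_0$, the coercivity of $S_{\omega}''(\phi_{\omega})$ transverse to its finite-dimensional degenerate and negative subspaces (in the spirit of Lemma~\ref{lem:vc2}) forces the component of $w(t)$ along the negative subspace to stay bounded away from $0$ for all $t$. I would then construct a bounded functional $\mathcal A(v)$, a symplectic pairing of $v$ with a suitably modulated copy of the unstable direction, whose time derivative along the flow obeys $\frac{d}{dt}\mathcal A(v(t))\ge c(-K_{\omega}(v(t)))\ge4c\delta_0>0$. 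Boundedness of $\mathcal A$ on the tube together with this positive lower bound on its derivative is impossible for all $t\ge0$, which is the desired contradiction.

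I expect the construction and estimation of $\mathcal A$ to be the main obstacle. The difficulty is accentuated by the fact that \eqref{dnls} has no usable virial identity (the existence of blowup solutions being open), so the classical virial $\int x^2|v|^2\,dx$ is unavailable and one must instead work with a bounded functional assembled from the symmetry generators and the unstable direction. Controlling the modulation speeds $\theta'(t)$ and the quadratic remainders so that the sign of $\frac{d}{dt}\mathcal A$ is genuinely governed by $-K_{\omega}(v(t))$ is the technical heart of the argument.
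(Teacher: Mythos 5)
Your reduction to the rotating frame, the observation that $\dual{K_{\omega}'(\phi_{\omega})}{\psi}\ne 0$ (via Lemma \ref{lem:vc2}), the Taylor expansion showing $S_{\omega}(\phi_{\omega}+\lambda\psi)<S_{\omega}(\phi_{\omega})=\mu(\omega)$, and the trapping argument giving $K_{\omega}(v(t))\le -4\delta_0$ for all $t$ are all correct; the first three ingredients coincide with the paper's Lemma \ref{lem3.1} and the opening step of Lemma \ref{lem4.2}, while the trapping step is a valid addition that the paper does not actually need. The genuine gap is exactly where you place it: the final step. You assert, without construction or proof, the existence of a bounded functional $\mathcal{A}$ satisfying $\frac{d}{dt}\mathcal{A}(v(t))\ge c\,(-K_{\omega}(v(t)))$, and the whole contradiction rests on this. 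That assertion is the entire content of the paper's Subsection \ref{sect32} (Lemmas \ref{lem3.3}--\ref{lem4.3}), and moreover the inequality that the natural functional actually satisfies has a different structure from the one you postulate.

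Concretely, the paper builds modulation parameters $\alpha(u)$ (Lemma \ref{lem3.3}), a modulated direction $q(u)$ obtained from $T(\alpha(u))\psi$ by adding multiples of $ia_j(u)$ so that $\dual{Q_j'(u)}{q(u)}=0$, and the bounded functional $A(u)=(iu,T(\alpha(u))\psi)_{L^2}$, and proves the exact identity $\frac{d}{dt}A(u(t))=P(u(t))$ with $P(u)=\dual{S_{\omega}'(u)}{q(u)}$ (Lemma \ref{lem:AP}). The positive lower bound on $|P|$ is then \emph{not} a pointwise comparison with $-K_{\omega}$: it comes from the conserved energy deficit. One slides from $u$ along $q(u)$ by $\Lambda(u)\in(-\lambda_2,\lambda_2)$ to land on the set $\{K_{\omega}=0\}$ (Lemma \ref{lem4.2}, which is where $\dual{K_{\omega}'(\phi_{\omega})}{\psi}\ne0$ is used), invokes the variational characterization $S_{\omega}(\phi_{\omega})\le S_{\omega}(u+\Lambda(u)q(u))$ (Lemma \ref{lem:vc1}) together with the second-order estimate $S_{\omega}(u+\lambda q(u))\le S_{\omega}(u)+\lambda P(u)$ (Lemma \ref{lem4.1}), and concludes $S_{\omega}(\phi_{\omega})\le S_{\omega}(u)+\lambda_2|P(u)|$ (Lemma \ref{lem4.3}); conservation then yields $|P(u_{\lambda}(t))|\ge\delta_{\lambda}/\lambda_2>0$ for all $t$. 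Your sketch contains none of this: no construction of $q$, no proof that the modulation speeds drop out of $\frac{d}{dt}\mathcal{A}$, and no mechanism producing the lower bound --- indeed the bound $\frac{d}{dt}\mathcal{A}\gtrsim -K_{\omega}(v(t))$ is not what this construction delivers. Your separate appeal to spectral coercivity of $S_{\omega}''(\phi_{\omega})$ transverse to its negative and null subspaces is a second unproven ingredient, and one the paper deliberately avoids: no spectral decomposition of $S_{\omega}''(\phi_{\omega})$ is ever performed, only the variational Lemmas \ref{lem:vc1} and \ref{lem:vc2} are used. As it stands, your argument shows that the solution stays in the region $\{K_{\omega}\le-4\delta_0\}$, but not that it leaves the tube $U_{\eps}(\phi_{\omega})$, so instability is not established.
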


To prove Proposition \ref{prop1}, 
we use the argument of Gon\c{c}alves Ribeiro \cite{GR} 
(see also \cite{sha-str,gss1}) with some modifications. 
Throughout this subsection, 
we fix $\omega\in \Omega$, 
and assume that $\psi\in H^1(\R)$ satisfies \eqref{ass:psi}. 

\begin{lemma}\label{lem3.1}
There exists a constant $\lambda_0>0$ such that 
\begin{align*}
S_{\omega}(\phi_{\omega}+\lambda \psi)<S_{\omega}(\phi_{\omega})
\end{align*}
for all $\lambda\in (-\lambda_0,0)\cup (0,\lambda_0)$. 
\end{lemma}

\begin{proof}
By Taylor's expansion, for $\lambda\in \R$, we have 
\begin{align*}
&S_{\omega}(\phi_{\omega}+\lambda \psi) \\
&=S_{\omega}(\phi_{\omega})+\lambda \dual{S_{\omega}'(\phi_{\omega})}{\psi}
+\lambda^2 \int_{0}^{1}(1-s)\dual{S_{\omega}''(\phi_{\omega}+s\lambda \psi)\psi}{\psi}\,ds \\
&=S_{\omega}(\phi_{\omega})
+\lambda^2 \int_{0}^{1}(1-s)\dual{S_{\omega}''(\phi_{\omega}+s\lambda \psi)\psi}{\psi}\,ds.
\end{align*}
Since $\dual{S_{\omega}''(\phi_{\omega})\psi}{\psi}<0$, 
by the continuity of $\lambda\mapsto \dual{S_{\omega}''(\phi_{\omega}+\lambda \psi)\psi}{\psi}$, 
there exists $\lambda_0>0$ such that 
$$\dual{S_{\omega}''(\phi_{\omega}+\lambda \psi)\psi}{\psi}
\le \frac{1}{2}\dual{S_{\omega}''(\phi_{\omega})\psi}{\psi}$$
for all $\lambda \in (-\lambda_0,\lambda_0)$. 
Thus, for $\lambda\in (-\lambda_0,0)\cup (0,\lambda_0)$, we have 
\begin{align*}
S_{\omega}(\phi_{\omega}+\lambda \psi)
\le S_{\omega}(\phi_{\omega})
+\frac{\lambda^2}{4} \dual{S_{\omega}''(\phi_{\omega})\psi}{\psi}
<S_{\omega}(\phi_{\omega}). 
\end{align*}
This completes the proof. 
\end{proof}

For $u\in H^1(\R)$, we define 
\begin{align*}
T_0'\, u=iu, \quad T_1'\, u=-\pd_x u.
\end{align*}
Then, by \eqref{T0} and \eqref{charge}, 
we have 
\begin{equation} \label{QT1}
\pd_{\theta_j}T(\theta) u=T(\theta)T_j'\, u=T_j'\, T(\theta)u, \quad 
\dual{Q_j'(u)}{v}=(T_j'\, u,iv)_{L^2}
\end{equation}
for $\theta=(\theta_0,\theta_1)\in \R^2$, $u$, $v\in H^1(\R)$ and $j=0,1$. 
We denote $\T=\R/2\pi \Z$. 

\begin{lemma}\label{lem3.3}
There exist a constant $\varepsilon_0>0$ and a $C^1$-function 
$$\alpha=(\alpha_0,\alpha_1):U_{\varepsilon_0}(\phi_{\omega})\to \T\times \R$$ 
such that $\alpha (\phi_{\omega})=0$, and 
\begin{description}
\item[{\rm (1)}]\
$\alpha (T(\xi)u)=\alpha (u)+\xi$
for all $u\in U_{\eps_0}(\phi_{\omega})$ and $\xi \in \T\times \R$. 
\item[{\rm (2)}]\
$(T_j'\, u,T(\alpha (u))\phi_{\omega})_{L^2}=0$ 
for all $u\in U_{\eps_0}(\phi_{\omega})$ and $j=0$, $1$. 
\item[{\rm (3)}]\
There exists $\rho>0$ such that 
$$\sum_{j,k=0}^{1}(T_j'\, u,T(\alpha (u))T_k'\, \phi_{\omega})_{L^2}
\zeta_j\zeta_k \ge \rho |\zeta|^2$$
for all $u\in U_{\eps_0}(\phi_{\omega})$ and $\zeta=(\zeta_0,\zeta_1) \in \R^2$.
\end{description}
\end{lemma}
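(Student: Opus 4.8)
The goal is to construct a $C^1$ map $\alpha$ on a tubular neighborhood of $\phi_\omega$ that assigns to each $u$ the group parameters $(\theta_0,\theta_1)$ which best align $\phi_\omega$ with $u$, in the sense of orthogonality conditions (2). This is a standard modulation/implicit-function-theorem construction, and I would set it up as follows.

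\medskip\noindent\textbf{Step 1: Set up the defining equations for $\alpha$ via the implicit function theorem.}
The plan is to define $\alpha(u)$ implicitly by the two orthogonality conditions in (2). Concretely, I would introduce the map
\begin{equation*}
G(u,\theta)=\bigl((T_0'\,u,\,T(\theta)\phi_\omega)_{L^2},\ (T_1'\,u,\,T(\theta)\phi_\omega)_{L^2}\bigr)\in\R^2,
\end{equation*}
for $u\in H^1(\R)$ and $\theta=(\theta_0,\theta_1)$ near $0$, and solve $G(u,\theta)=0$ for $\theta=\alpha(u)$ near $u=\phi_\omega$. First I would verify that $G(\phi_\omega,0)=0$, i.e. $(T_j'\,\phi_\omega,\phi_\omega)_{L^2}=0$ for $j=0,1$. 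For $j=0$ this is $(i\phi_\omega,\phi_\omega)_{L^2}=\Re\int i|\phi_\omega|^2=0$, and for $j=1$ it is $(-\pd_x\phi_\omega,\phi_\omega)_{L^2}=-\tfrac12\int\pd_x|\phi_\omega|^2=0$; both hold since $\phi_\omega\in H^1$. The map $G$ is $C^1$ in $(u,\theta)$ because $\theta\mapsto T(\theta)\phi_\omega$ is $C^1$ into $H^1(\R)$ (using $\pd_{\theta_j}T(\theta)\phi_\omega=T(\theta)T_j'\,\phi_\omega$ from \eqref{QT1}).

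\medskip\noindent\textbf{Step 2: Check invertibility of the $\theta$-derivative at the base point.}
The key computation is $D_\theta G(\phi_\omega,0)$, whose $(j,k)$ entry is $\pd_{\theta_k}(T_j'\,\phi_\omega,T(\theta)\phi_\omega)_{L^2}|_{\theta=0}=(T_j'\,\phi_\omega,T_k'\,\phi_\omega)_{L^2}$. I would argue this symmetric matrix is positive definite: its quadratic form on $\zeta\in\R^2$ is $\|\sum_k\zeta_k T_k'\,\phi_\omega\|_{L^2}^2\ge 0$, vanishing only if $\zeta_0 i\phi_\omega-\zeta_1\pd_x\phi_\omega=0$, which forces $\zeta=0$ since $i\phi_\omega$ and $\pd_x\phi_\omega$ are linearly independent in $L^2$ (they cannot be proportional because $i\phi_\omega$ never vanishes while $\pd_x\phi_\omega$ does). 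Hence $D_\theta G(\phi_\omega,0)$ is invertible, and the implicit function theorem yields $\eps_0>0$ and a $C^1$ map $\alpha$ on a ball around $\phi_\omega$ with $\alpha(\phi_\omega)=0$ and $G(u,\alpha(u))=0$, which is exactly (2). After shrinking $\eps_0$, the construction extends to all of $U_{\eps_0}(\phi_\omega)$ by using the equivariance in Step 3 to patch across the group orbit.

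\medskip\noindent\textbf{Step 3: Equivariance (1) and the coercivity bound (3).}
For (1), I would exploit the relation $(T_j'\,T(\xi)u,T(\theta)\phi_\omega)_{L^2}=(T_j'\,u,T(\theta-\xi)\phi_\omega)_{L^2}$, which follows from $T(\xi)$ being a one-parameter unitary group commuting appropriately with $T_j'$ (see \eqref{QT1}); this shows that if $\alpha(u)$ solves the orthogonality system for $u$, then $\alpha(u)+\xi$ solves it for $T(\xi)u$, and by the local uniqueness in the implicit function theorem, $\alpha(T(\xi)u)=\alpha(u)+\xi$. This identity is also what lets me define $\alpha$ consistently on the full neighborhood $U_{\eps_0}(\phi_\omega)$, since any $u$ there is $H^1$-close to some $T(\theta)\phi_\omega$. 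For (3), at $u=\phi_\omega$ the quadratic form $\sum_{j,k}(T_j'\,\phi_\omega,T_k'\,\phi_\omega)_{L^2}\zeta_j\zeta_k$ equals the positive definite form from Step 2, so it is bounded below by $\rho_0|\zeta|^2$ with $\rho_0>0$ its smallest eigenvalue; by continuity of $u\mapsto\alpha(u)$ and of $(T_j'\,u,T(\alpha(u))T_k'\,\phi_\omega)_{L^2}$, after shrinking $\eps_0$ the form stays $\ge\rho|\zeta|^2$ with $\rho=\rho_0/2$ uniformly on $U_{\eps_0}(\phi_\omega)$.

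\medskip\noindent I expect the only genuine subtlety to be the linear independence of $i\phi_\omega$ and $\pd_x\phi_\omega$ in $L^2$ (equivalently, nondegeneracy of $D_\theta G(\phi_\omega,0)$); everything else is a routine implicit-function-theorem and continuity argument. The independence is clear from the explicit form \eqref{solitary2}--\eqref{solitary3}, since $|\phi_\omega|$ is an even, strictly positive, decaying profile, so $i\phi_\omega$ is nowhere zero whereas $\pd_x\phi_\omega$ vanishes at the origin.
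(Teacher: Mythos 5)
Your overall route is the right one: the paper itself gives no argument for Lemma \ref{lem3.3} (its proof is the citation ``See Section 3 of \cite{GR}''), and the construction you set up --- defining $\alpha$ by the orthogonality system $G(u,\theta)=0$, applying the implicit function theorem at $(\phi_\omega,0)$, and using equivariance to patch the local chart along the orbit --- is precisely the standard modulation argument carried out in that reference. However, the one step you yourself single out as the genuine subtlety is justified by a false claim. You argue that $i\phi_\omega$ and $\pd_x\phi_\omega$ are independent because ``$\pd_x\phi_\omega$ vanishes at the origin,'' citing the evenness of $|\phi_\omega|=\tilde\phi_\omega$. This ignores the phase in \eqref{solitary2}: writing $\phi_\omega=\tilde\phi_\omega e^{i\Theta}$ with $\Theta(x)=\frac{\omega_1}{2}x-\frac14\int_{-\infty}^{x}\tilde\phi_\omega(\eta)^2\,d\eta$, one has $\pd_x\phi_\omega=\bigl(\tilde\phi_\omega'+i\Theta'\tilde\phi_\omega\bigr)e^{i\Theta}$, and since $\tilde\phi_\omega'(0)=0$ this gives $\pd_x\phi_\omega(0)=i\bigl(\frac{\omega_1}{2}-\frac14\tilde\phi_\omega(0)^2\bigr)\tilde\phi_\omega(0)e^{i\Theta(0)}$, which is \emph{nonzero} unless $\tilde\phi_\omega(0)^2=2\omega_1$ (equivalently $(\gamma-1)\omega_1^2=4\omega_0$). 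In particular, for every $\omega$ with $\omega_1\le 0$, and for all but at most one value of the ratio $\omega_1^2/\omega_0$, the function $\pd_x\phi_\omega$ does not vanish at the origin (indeed, for $\omega_1\le0$ it vanishes nowhere), so your justification of the nondegeneracy of $D_\theta G(\phi_\omega,0)$ collapses exactly where it is needed.

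The conclusion is nevertheless true, and the repair is one line, using decay instead of pointwise vanishing: if $\zeta_0\, i\phi_\omega-\zeta_1\,\pd_x\phi_\omega=0$ with $\zeta_1\ne0$, then $\phi_\omega$ solves $\pd_x\phi=ic\phi$ with $c=\zeta_0/\zeta_1\in\R$, hence $\phi_\omega(x)=\phi_\omega(0)e^{icx}$ has constant nonzero modulus, contradicting $\phi_\omega\in L^2(\R)$; and $\zeta_1=0$ forces $\zeta_0=0$ since $\phi_\omega\ne0$. With this substitution your Steps 1 and 2 are sound. One further point needs tightening: in Step 3 you obtain the uniform bound (3) on all of $U_{\eps_0}(\phi_\omega)$ ``by continuity,'' but continuity at $\phi_\omega$ only yields the bound on an $H^1$-ball around $\phi_\omega$, whereas $U_{\eps_0}(\phi_\omega)$ is a noncompact tube around the whole orbit. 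What transfers the coercivity from the ball to the tube is the invariance $h_{jk}(T(\xi)u)=h_{jk}(u)$ of the matrix entries, which follows from property (1), the commutation relations \eqref{QT1}, and the fact that $T(\xi)$ is an $L^2$-isometry --- exactly the computation \eqref{hjk1} that the paper records right after the lemma. Since your definition of $\alpha$ on the tube is obtained by translating the local chart by the group action, this invariance is available to you; it, and not continuity alone, is what gives a single $\rho>0$ valid on all of $U_{\eps_0}(\phi_\omega)$.
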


\begin{proof}
See Section 3 of \cite{GR}. 
\end{proof}

For $u\in U_{\eps_0}(\phi_{\omega})$, 
we define 
$$H(u)=[h_{jk}(u)]_{j,k=0,1}, \quad 
h_{jk}(u)=(T_j'\, u,T(\alpha (u))T_k'\, \phi_{\omega})_{L^2}.$$
Then, by Lemma \ref{lem3.3} (1), we have  
\begin{equation}\label{hjk1}
h_{jk}(T(\xi)u)
=(T(\xi)T_j'\, u,T(\alpha (u)+\xi)T_k'\, \phi_{\omega})_{L^2}=h_{jk}(u)
\end{equation}
for $u\in U_{\eps_0}(\phi_{\omega})$ and $\xi \in \T\times \R$. 

Moreover, differentiating Lemma \ref{lem3.3} (2) with respect to $u$, 
we have 
\begin{equation}\label{hjk2}
\sum_{k=0}^{1}h_{jk}(u) \dual{\alpha_k'(u)}{w}
=(T(\alpha (u))T_j'\, \phi_{\omega},w)_{L^2}
\end{equation}
for $u\in U_{\eps_0}(\phi_{\omega})$, $w\in H^1(\R)$ and $j=0,1$. 
By Lemma \ref{lem3.3} (3), the matrix $H(u)$ is invertible, 
and we denote the inverse $H(u)^{-1}$ by $G(u)=[g_{jk}(u)]$. 
Then, there exists a constant $C>0$ such that 
\begin{equation}\label{gjk1}
|g_{jk}(u)|\le C \hspace{2mm}\mbox{for all} \hspace{2mm} 
u\in U_{\eps_0}(\phi_{\omega}), ~ j,k=0,1.
\end{equation}

For $j=0,1$ and $u\in U_{\eps_0}(\phi_{\omega})$, we define 
$$a_j(u):=\sum_{k=0}^{1}g_{jk}(u)T(\alpha (u))T_k'\, \phi_{\omega}.$$
Since $\phi_{\omega}\in H^2(\R)$, 
we see that $a_j(u) \in H^1(\R)$, 
it follows from \eqref{hjk2} that 
$$\dual{\alpha_j'(u)}{w}=(a_j(u),w)_{L^2}, \quad w\in H^1(\R).$$
By \eqref{hjk1} and Lemma \ref{lem3.3} (1), 
for  $j=0,1$, we have 
\begin{equation}\label{aj1}
a_j(T(\xi)u)=T(\xi)a_j(u)
\hspace{2mm}\mbox{for all} \hspace{2mm} 
u\in U_{\eps_0}(\phi_{\omega}),~ \xi \in \T\times \R. 
\end{equation}
Moreover, by \eqref{gjk1}, there exists a constant $C>0$ such that 
\begin{equation}\label{aj2}
\|a_j(u)\|_{H^1}\le C \hspace{2mm}\mbox{for all} \hspace{2mm} 
u\in U_{\eps_0}(\phi_{\omega}), ~ j=0,1.
\end{equation}

Next, for $u\in U_{\eps_0}(\phi_{\omega})$, we define 
\begin{align}
&A(u)=\left(iu,T(\alpha(u))\psi \right)_{L^2}, \label{def:Au}\\
&q(u)=T(\alpha(u))\psi
+\sum_{j=0}^{1}\left(iu,T(\alpha(u))T_j'\, \psi \right)_{L^2} i a_j(u).
\label{def:qu}
\end{align}
Then, since $\psi$, $a_0(u)$, $a_1(u)\in H^1(\R)$, 
we see that $q(u)\in H^1(\R)$. 

\begin{lemma}\label{lem3.4}
For $u\in U_{\eps_0}(\phi_{\omega})$, 
\begin{description}
\item[{\rm (1)}]\
$A(T(\xi)u)=A(u)$, $q(T(\xi)u)=T(\xi)q(u)$ 
for all $\xi \in \T\times \R$. 
\item[{\rm (2)}]\
$\dual{A'(u)}{w}=(q(u),iw)_{L^2}$ for $w\in H^1(\R)$. 
\item[{\rm (3)}]\
$q(\phi_{\omega})=\psi$. 
 \item[{\rm (4)}]\
$\dual{Q_j'(u)}{q(u)}=0$ for $j=0,1$. 
\end{description}
\end{lemma}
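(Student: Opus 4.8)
The plan is to establish the four assertions in order, deriving each from the equivariance and adjointness structure already recorded in \eqref{QT1}, \eqref{aj1} and Lemma \ref{lem3.3}, together with three elementary facts about the real $L^2$ inner product: that it is symmetric, that each $T(\xi)$ is an isometry commuting with multiplication by $i$ (so $(T(\xi)v,T(\xi)w)_{L^2}=(v,w)_{L^2}$), and that $(iv,iw)_{L^2}=(v,w)_{L^2}$. I will also use repeatedly that each $T_j'$ commutes with $T(\theta)$ and with $i$, that $T_0'$ and $T_1'$ commute with each other, and that both $T_j'$ are skew-adjoint for $(\cdot,\cdot)_{L^2}$; the last fact yields the adjoint-shift identity $(iu,T_j'v)_{L^2}=(T_j'u,iv)_{L^2}=\dual{Q_j'(u)}{v}$.

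For (1), I would substitute $u\mapsto T(\xi)u$ into \eqref{def:Au} and \eqref{def:qu}, insert $\alpha(T(\xi)u)=\alpha(u)+\xi$ from Lemma \ref{lem3.3}(1) together with the group law $T(\alpha(u)+\xi)=T(\xi)T(\alpha(u))$, and apply \eqref{aj1}. Invariance of $A$ is then immediate from unitarity of $T(\xi)$, while each summand of $q$ acquires exactly one factor $T(\xi)$ on the left (using that $i$ commutes with $T(\xi)$), giving $q(T(\xi)u)=T(\xi)q(u)$.

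For (2), I would differentiate $A(u)=(iu,T(\alpha(u))\psi)_{L^2}$ in a direction $w$: the derivative of the second slot is $\sum_j \dual{\alpha_j'(u)}{w}\,T(\alpha(u))T_j'\psi=\sum_j (a_j(u),w)_{L^2}\,T(\alpha(u))T_j'\psi$ by \eqref{QT1} and $\dual{\alpha_j'(u)}{w}=(a_j(u),w)_{L^2}$. Pairing $q(u)$ against $iw$ and using $(iv,iw)_{L^2}=(v,w)_{L^2}$ and symmetry of the inner product reproduces exactly this expression, which gives (2). For (3), I would set $u=\phi_\omega$, where $\alpha(\phi_\omega)=0$ and $T(0)=\mathrm{id}$; the correction coefficients are then $(i\phi_\omega,T_j'\psi)_{L^2}=\dual{Q_j'(\phi_\omega)}{\psi}$ by the adjoint-shift identity, and these vanish by the hypothesis \eqref{ass:psi}, leaving $q(\phi_\omega)=\psi$.

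Part (4) is where the real work lies, and I expect the symmetry of the matrix $H(u)$ to be the crux. Writing $c_k(u)=(iu,T(\alpha(u))T_k'\psi)_{L^2}$, I would expand $\dual{Q_j'(u)}{q(u)}=(T_j'u,iq(u))_{L^2}$ into the first term $(T_j'u,iT(\alpha(u))\psi)_{L^2}=c_j(u)$ (again by the adjoint-shift identity and $T_j'T(\theta)=T(\theta)T_j'$) minus $\sum_k c_k(u)(T_j'u,a_k(u))_{L^2}$. The decisive claim is $(T_j'u,a_k(u))_{L^2}=\delta_{jk}$. Since $a_k(u)=\sum_l g_{kl}(u)T(\alpha(u))T_l'\phi_\omega$, this equals $\sum_l g_{kl}(u)h_{jl}(u)$, so it reduces to showing $H(u)=[h_{jk}(u)]$ is symmetric: then $G(u)=H(u)^{-1}$ is symmetric and $\sum_l h_{jl}(u)g_{kl}(u)=(H(u)G(u))_{jk}=\delta_{jk}$. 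Symmetry of $H(u)$ follows by moving $T_j'$ off the first slot via skew-adjointness and its commutation with $T(\alpha(u))$, which gives $h_{jk}(u)=-(u,T(\alpha(u))T_j'T_k'\phi_\omega)_{L^2}$; this is symmetric in $(j,k)$ precisely because $T_j'$ and $T_k'$ commute. With $(T_j'u,a_k(u))_{L^2}=\delta_{jk}$ established, the two contributions cancel and $\dual{Q_j'(u)}{q(u)}=0$.
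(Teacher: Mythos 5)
Your proofs of (1)--(3) coincide with the paper's: equivariance via Lemma \ref{lem3.3} (1) and \eqref{aj1} for (1), the product rule together with $\dual{\alpha_j'(u)}{w}=(a_j(u),w)_{L^2}$ for (2), and the vanishing of the coefficients $(i\phi_{\omega},T_j'\,\psi)_{L^2}=\dual{Q_j'(\phi_{\omega})}{\psi}=0$ for (3). Part (4) is where you genuinely diverge, and your route is correct. The paper argues structurally: for $u\in H^2(\R)\cap U_{\eps_0}(\phi_{\omega})$ it differentiates the invariance $A(T(\xi)u)=A(u)$ from (1) at $\xi=0$, obtaining $0=\dual{A'(u)}{T_j'\,u}=(q(u),iT_j'\,u)_{L^2}$ via (2), and then removes the $H^2$ restriction by density (the restriction is needed because $\xi\mapsto T(\xi)u$ is differentiable as a map into $H^1$ only for $u\in H^2$). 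You instead verify the identity algebraically at each fixed $u\in H^1(\R)$: expanding $\dual{Q_j'(u)}{q(u)}=c_j(u)-\sum_k c_k(u)(T_j'\,u,a_k(u))_{L^2}$ with $c_k(u)=(iu,T(\alpha(u))T_k'\,\psi)_{L^2}$, and reducing the crux $(T_j'\,u,a_k(u))_{L^2}=\delta_{jk}$ to the symmetry of $H(u)$, which you establish correctly: skew-adjointness of $T_j'$ and its commutation with $T(\alpha(u))$ give $h_{jk}(u)=-(u,T(\alpha(u))T_j'\,T_k'\,\phi_{\omega})_{L^2}$ (legitimate since $\phi_{\omega}\in H^2(\R)$, so both derivatives land on $\phi_{\omega}$), and $T_0'\,T_1'=T_1'\,T_0'$ yields $h_{jk}=h_{kj}$. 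It is worth observing that your key identity $(T_j'\,u,a_k(u))_{L^2}=\dual{\alpha_k'(u)}{T_j'\,u}=\delta_{jk}$ is exactly the infinitesimal form of the equivariance $\alpha(T(\xi)u)=\alpha(u)+\xi$, so both proofs rest on the same underlying fact; the paper extracts it by differentiating along the group orbit (hence the $H^2$-plus-density step), while you extract it from the matrix structure, keeping the whole argument pointwise in $u\in H^1(\R)$ and dispensing with the density argument. The trade-off: the paper's differentiation argument never uses symmetry of $H(u)$ and so does not depend on the commutativity of the generators, whereas your argument exploits it; in exchange, yours is self-contained linear algebra once Lemma \ref{lem3.3} is granted.
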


\begin{proof}
\par \noindent (1) \hspace{1mm}
By Lemma \ref{lem3.3} (1), we have 
\begin{align*}
A(T(\xi)u)
&=\left(iT(\xi)u,T(\alpha(u)+\xi)\psi \right)_{L^2} \\
&=\left(iT(\xi)u,T(\xi)T(\alpha(u))\psi \right)_{L^2}=A(u).
\end{align*}
Moreover, by \eqref{aj1}, we have 
\begin{align*}
q(T(\xi)u)
&=T(\xi)T(\alpha(u))\psi
+\sum_{j=0}^{1} \left(iT(\xi)u,T(\xi)T(\alpha(u))T_j'\, \psi \right)_{L^2} i a_j(T(\xi)u) \\
&=T(\xi) q(u).
\end{align*}
\par \noindent (2) \hspace{1mm}
For $u\in U_{\eps_0}(\phi_{\omega})$ and $w\in H^1(\R)$, 
we have 
\begin{align*}
\dual{A'(u)}{w}
&=\left(iw,T(\alpha(u))\psi \right)_{L^2}
+\sum_{j=0}^{1}\dual{\alpha_j'(u)}{w} \left(iu,T(\alpha(u))T_j'\, \psi \right)_{L^2} \\
&=\left(iw,T(\alpha(u))\psi \right)_{L^2}
+\sum_{j=0}^{1} \left(iu,T(\alpha(u))T_j'\, \psi \right)_{L^2} \left(a_j(u),w \right)_{L^2} \\
&=\left(q(u), i w \right)_{L^2}. 
\end{align*}
\par \noindent (3) \hspace{1mm}
By \eqref{QT1} and the assumption \eqref{ass:psi}, we have 
$$(i\phi_{\omega},T_j'\, \psi)_{L^2}
=(T_j'\, \phi_{\omega},i\psi)_{L^2}
=\dual{Q_j'(\phi_{\omega})}{\psi}=0.$$
Moreover, since $\alpha(\phi_{\omega})=0$, 
by \eqref{def:qu}, we have $q(\phi_{\omega})=\psi$. 
\par \noindent (4) \hspace{1mm}
For $u\in H^2(\R)\cap U_{\eps_0}(\phi_{\omega})$, 
by (1) and (2), we have 
\begin{align*}
0=\pd_{\xi_j} A(T(\xi)u) \big|_{\xi=0}
=\dual{A'(u)}{T_j'\, u}
=(q(u),iT_j'\, u)_{L^2}.
\end{align*}
By density argument, we have 
$(q(u),iT_j'\, u)_{L^2}=0$ for all $u\in U_{\eps_0}(\phi_{\omega})$. 

Thus, we have $\dual{Q_j'(u)}{q(u)}=(T_j'\, u,iq(u))_{L^2}=0$ 
for $u\in U_{\eps_0}(\phi_{\omega})$. 
\end{proof}

For $u\in U_{\varepsilon_0}(\phi_{\omega})$, we define
$$P(u):=\dual{E'(u)}{q(u)}.$$
We remark that by Lemma \ref{lem3.4} (4), we have 
\begin{equation}\label{P1}
P(u)=\dual{S_{\omega}'(u)}{q(u)}, \quad 
u\in U_{\varepsilon_0}(\phi_{\omega}).
\end{equation}

\begin{lemma}\label{lem:AP}
Let $I$ be an interval of $\R$. 
Let $u\in C(I,H^1(\R))\cap C^1(I,H^{-1}(\R))$ be a solution of \eqref{dnls}, 
and assume that $u(t)\in U_{\varepsilon_0}(\phi_{\omega})$ for all $t\in I$. 
Then,
$$\frac{d}{dt}A(u(t))=P(u(t))$$
for all $t\in I$. 
\end{lemma}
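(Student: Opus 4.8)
The plan is to prove the identity by the chain rule, exploiting the fact that although $A$ is differentiable only on $H^1(\R)$, its derivative is \emph{represented by a function lying in $H^1(\R)$}, so that it may be paired against $\pd_t u(t)$, which exists only in $H^{-1}(\R)$. Concretely, I would use the Hamiltonian form $i\pd_t u=E'(u)$ in $H^{-1}(\R)$, that is $\pd_t u=-iE'(u)$, and aim to establish
$$\frac{d}{dt}A(u(t))=\dual{\pd_t u(t)}{-iq(u(t))},$$
after which the conclusion is immediate.

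The first step is to record the representation of $A'$. By Lemma \ref{lem3.4}~(2), $\dual{A'(u)}{w}=(q(u),iw)_{L^2}=(-iq(u),w)_{L^2}$ for every $w\in H^1(\R)$, and since $q(u)\in H^1(\R)$ the functional $w\mapsto(-iq(u),w)_{L^2}$ extends from $H^1(\R)$ to the duality pairing of $H^{-1}(\R)$ with $H^1(\R)$ against the fixed element $-iq(u)\in H^1(\R)$. This structural feature is exactly what makes the chain rule legitimate despite the regularity mismatch between $A$ and $t\mapsto u(t)$, which I expect to be the main obstacle in the proof.

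Next I would justify the chain rule by a mean value argument along $H^1$-segments. Fix $t$ and take $|h|$ small enough that $u(t+h)$ is $H^1$-close to $u(t)$; since $U_{\eps_0}(\phi_{\omega})$ is open in $H^1(\R)$ and $A$ is of class $C^1$ there (being built from the $C^1$ map $\alpha$ of Lemma \ref{lem3.3} and continuous bilinear operations), the fundamental theorem of calculus gives
$$A(u(t+h))-A(u(t))=\int_0^1\bigl(-iq(v_s),\,u(t+h)-u(t)\bigr)_{L^2}\,ds,\qquad v_s:=u(t)+s\bigl(u(t+h)-u(t)\bigr).$$
Because each $-iq(v_s)$ lies in $H^1(\R)$, I rewrite every integrand as the $H^{-1}$--$H^1$ pairing of $u(t+h)-u(t)$ with $-iq(v_s)$, divide by $h$, and let $h\to 0$. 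Here $h^{-1}(u(t+h)-u(t))\to\pd_t u(t)$ in $H^{-1}(\R)$, while $v_s\to u(t)$ in $H^1(\R)$ uniformly in $s\in[0,1]$ since $u\in C(I,H^1(\R))$; by continuity of $u\mapsto q(u)$ from $U_{\eps_0}(\phi_{\omega})$ into $H^1(\R)$ it follows that $-iq(v_s)\to-iq(u(t))$ in $H^1(\R)$ uniformly in $s$. Passing to the limit under the integral then yields $\frac{d}{dt}A(u(t))=\dual{\pd_t u(t)}{-iq(u(t))}$.

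Finally I substitute $\pd_t u(t)=-iE'(u(t))$ and use that multiplication by $i$ is a real isometry preserving the $H^{-1}$--$H^1$ pairing (equivalently, $i$ is skew-adjoint for the real $L^2$ inner product), so that $\dual{-iE'(u(t))}{-iq(u(t))}=\dual{E'(u(t))}{q(u(t))}=P(u(t))$, which is the desired identity. The only genuinely delicate point is the interchange of limit and integral in the previous step, and I expect that to be the main technical difficulty; it is handled precisely by the uniform $H^1$-convergence of $-iq(v_s)$ together with the $H^{-1}$-convergence of the difference quotient, both of which rest on the $H^1$-valued representation of $A'$ furnished by Lemma \ref{lem3.4}~(2).
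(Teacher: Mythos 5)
Your proof is correct, and it shares the paper's skeleton: differentiate $t\mapsto A(u(t))$ along the flow using the representation $\dual{A'(u)}{w}=(q(u),iw)_{L^2}$ from Lemma \ref{lem3.4} (2), then substitute the Hamiltonian form of \eqref{dnls}. The genuine difference lies in how the differentiation step is justified. The paper disposes of it in one line by citing Lemma 4.6 of Grillakis--Shatah--Strauss \cite{gss1}, which is exactly the abstract statement that a $C^1$ functional on $H^1(\R)$ whose derivative is represented by a continuous $H^1$-valued map may be differentiated along trajectories of class $C(I,H^1(\R))\cap C^1(I,H^{-1}(\R))$, the derivative being the $H^{-1}$--$H^1$ pairing against $\pd_t u$. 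You instead reprove this fact from scratch: the fundamental theorem of calculus along $H^1$-segments (legitimate because $U_{\eps_0}(\phi_{\omega})$ is open, so the segment from $u(t)$ to $u(t+h)$ stays inside it for $|h|$ small), rewriting each $L^2$ pairing as an $H^{-1}$--$H^1$ duality against the fixed $H^1$ element $-iq(v_s)$, and passing to the limit using the $H^{-1}$-convergence of the difference quotient together with the uniform (in $s$) $H^1$-convergence of $-iq(v_s)$, which follows from continuity of $q$ at $u(t)$ --- here the paper's observation that $\phi_{\omega}\in H^2(\R)$, hence $q(u)\in H^1(\R)$, is doing the essential work, just as you identify. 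Your limit interchange is sound, so the argument is complete. What your route buys is self-containedness: the reader need not consult \cite{gss1}. What the paper's route buys is brevity, delegating the regularity bookkeeping to a standard lemma. One cosmetic simplification: your last step, converting $\dual{\pd_t u}{-iq(u)}$ into $\dual{E'(u)}{q(u)}$ via invariance of the real pairing under multiplication by $i$, can be avoided by writing the derivative directly as $\dual{i\pd_t u(t)}{q(u(t))}$ and using $i\pd_t u=E'(u)$, which is how the paper phrases it.
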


\begin{proof}
By Lemma 4.6 of \cite{gss1} 
and Lemma \ref{lem3.4} (2), 
we see that $t\mapsto A(u(t))$ is a $C^1$-function on $I$, and 
\begin{align*}
\frac{d}{dt}A(u(t))
=\dual{i\pd_t u(t)}{q(u(t))}
\end{align*}
for all $t\in I$. 
Since $u(t)$ is a solution of \eqref{dnls}, we have 
\begin{align*}
\dual{i\pd_t u(t)}{q(u(t))}
=\dual{E'(u(t))}{q(u(t))}=P(u(t))
\end{align*}
for all $t\in I$. 
This completes the proof. 
\end{proof}

\begin{lemma}\label{lem4.1}
There exist constants 
$\lambda_1>0$ and $\eps_1\in (0,\eps_0)$ such that 
\begin{equation*}
S_{\omega}(u+\lambda q(u))\le S_{\omega}(u)+\lambda P(u)
\end{equation*}
for all $\lambda\in (-\lambda_1,\lambda_1)$ and $u\in U_{\eps_1}(\phi_{\omega})$. 
\end{lemma}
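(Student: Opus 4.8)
The plan is to expand $S_{\omega}(u+\lambda q(u))$ to second order in $\lambda$ with an integral remainder, exactly as in the proof of Lemma \ref{lem3.1}, but now with the base point $u$ allowed to vary in a neighborhood of $\phi_{\omega}$ and with the direction $q(u)$ in place of the fixed $\psi$. Since $S_{\omega}$ is of class $C^2$ on $H^1(\R)$, Taylor's formula with integral remainder gives
\begin{equation*}
S_{\omega}(u+\lambda q(u))=S_{\omega}(u)+\lambda \dual{S_{\omega}'(u)}{q(u)}
+\lambda^2 \int_{0}^{1}(1-s)\dual{S_{\omega}''(u+s\lambda q(u))q(u)}{q(u)}\,ds.
\end{equation*}
By \eqref{P1} the first-order term is exactly $\lambda \dual{S_{\omega}'(u)}{q(u)}=\lambda P(u)$, so the asserted inequality is equivalent to showing that the remainder integral is nonpositive for all $u$ near $\phi_{\omega}$ and all small $\lambda$. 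Because $\lambda^2(1-s)\ge 0$, it suffices to prove that $\dual{S_{\omega}''(u+s\lambda q(u))q(u)}{q(u)}\le 0$ for all $s\in[0,1]$, all $u\in U_{\eps_1}(\phi_{\omega})$, and all $\lambda\in(-\lambda_1,\lambda_1)$, with $\eps_1,\lambda_1$ to be chosen.

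The sign is forced by continuity together with the reference value at $\phi_{\omega}$. By Lemma \ref{lem3.4} (3) we have $q(\phi_{\omega})=\psi$, and by assumption \eqref{ass:psi}, $\dual{S_{\omega}''(\phi_{\omega})\psi}{\psi}<0$. Thus the quantity $\dual{S_{\omega}''(v)q(u)}{q(u)}$ equals a strictly negative number at the base configuration $(u,v)=(\phi_{\omega},\phi_{\omega})$. Setting $v=u+s\lambda q(u)$, one has $\|v-\phi_{\omega}\|_{H^1}\le \|u-\phi_{\omega}\|_{H^1}+|\lambda|\,\sup\|q(u)\|_{H^1}$, where the supremum over $U_{\eps_0}(\phi_{\omega})$ is finite by \eqref{aj2} together with $\psi\in H^1(\R)$ and the $C^1$-regularity of $\alpha$. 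Hence $v$ can be kept arbitrarily close to $\phi_{\omega}$ by first shrinking $\eps_1$ and then $\lambda_1$, uniformly in $s\in[0,1]$, so both arguments $u$ and $v$ lie in a small $H^1$-ball around $\phi_{\omega}$, and one concludes the desired negativity, hence the inequality.

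The main point to verify is the joint continuity of the map $(u,v)\mapsto \dual{S_{\omega}''(v)q(u)}{q(u)}$ at $(\phi_{\omega},\phi_{\omega})$. This splits into two pieces: first, the continuity of $u\mapsto q(u)$ from $U_{\eps_0}(\phi_{\omega})$ into $H^1(\R)$, which follows from the definition \eqref{def:qu} of $q$, the $C^1$-regularity of $\alpha$ from Lemma \ref{lem3.3}, and the continuity of the maps $a_j$; and second, the continuity of $S_{\omega}''$ as a map from $H^1(\R)$ into the space of bounded bilinear forms, which holds because $S_{\omega}\in C^2(H^1(\R),\R)$, with the nonlinear terms $(i|v|^2\pd_x v,v)_{L^2}$ and $\|v\|_{L^6}^6$ controlled via the Sobolev embedding $H^1(\R)\hookrightarrow L^{\infty}(\R)$. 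Decomposing
\begin{equation*}
\dual{S_{\omega}''(v)q(u)}{q(u)}-\dual{S_{\omega}''(\phi_{\omega})\psi}{\psi}
=\dual{\bigl(S_{\omega}''(v)-S_{\omega}''(\phi_{\omega})\bigr)q(u)}{q(u)}
+\dual{S_{\omega}''(\phi_{\omega})q(u)}{q(u)}-\dual{S_{\omega}''(\phi_{\omega})\psi}{\psi},
\end{equation*}
the first term is bounded by $\|S_{\omega}''(v)-S_{\omega}''(\phi_{\omega})\|\,\|q(u)\|_{H^1}^2$ and the remaining terms are continuous in $q(u)$ for the fixed bounded form $S_{\omega}''(\phi_{\omega})$; both pieces tend to $0$ as $(u,v)\to(\phi_{\omega},\phi_{\omega})$, which yields the joint continuity and completes the argument. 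The only mild technical obstacle is thus bookkeeping the $C^2$-smoothness and operator-norm continuity of $S_{\omega}''$ for the derivative and sextic nonlinearities, but these are routine consequences of the one-dimensional Sobolev embedding.
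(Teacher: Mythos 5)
Your Taylor expansion with integral remainder and the identification of the first-order term with $\lambda P(u)$ via \eqref{P1} are exactly the paper's first step, and so is the idea of forcing the sign of the remainder by continuity from the reference value $\dual{S_{\omega}''(\phi_{\omega})q(\phi_{\omega})}{q(\phi_{\omega})}=\dual{S_{\omega}''(\phi_{\omega})\psi}{\psi}<0$. However, there is a genuine gap in your localization step. The set $U_{\eps_1}(\phi_{\omega})$ is a neighborhood of the \emph{orbit} $\{T(\theta)\phi_{\omega}:\theta\in\R^2\}$: membership $u\in U_{\eps_1}(\phi_{\omega})$ only means $\inf_{\theta}\|u-T(\theta)\phi_{\omega}\|_{H^1}<\eps_1$, not $\|u-\phi_{\omega}\|_{H^1}<\eps_1$. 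Shrinking $\eps_1$ therefore does not make $\|u-\phi_{\omega}\|_{H^1}$ small; for instance $u=T(\theta)\phi_{\omega}$ with a large spatial shift $\theta_1$ belongs to $U_{\eps}(\phi_{\omega})$ for every $\eps>0$, while $\|u-\phi_{\omega}\|_{H^1}$ remains of the order of $2\|\phi_{\omega}\|_{H^1}$. Consequently your claim that ``both arguments $u$ and $v=u+s\lambda q(u)$ lie in a small $H^1$-ball around $\phi_{\omega}$'' is false, and joint continuity of $(u,v)\mapsto\dual{S_{\omega}''(v)q(u)}{q(u)}$ at the single point $(\phi_{\omega},\phi_{\omega})$ does not yield the inequality for all $u\in U_{\eps_1}(\phi_{\omega})$.

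The missing ingredient is precisely the invariance remark in the paper's proof: setting $R(\lambda,u):=\dual{S_{\omega}''(u+\lambda q(u))q(u)}{q(u)}$, one has $R(\lambda,T(\xi)u)=R(\lambda,u)$ for all $\xi\in\T\times\R$, which follows from the equivariance $q(T(\xi)u)=T(\xi)q(u)$ of Lemma \ref{lem3.4} (1), the fact that $T(\xi)$ is an isometry of $H^1(\R)$, and the $T$-invariance of $S_{\omega}$ (so that $S_{\omega}''$ conjugates under $T(\xi)$); similarly $P(T(\xi)u)=P(u)$. With this invariance in hand, given $u\in U_{\eps_1}(\phi_{\omega})$ you choose $\theta$ with $\|T(-\theta)u-\phi_{\omega}\|_{H^1}<\eps_1$ and evaluate $R(\lambda,u)=R(\lambda,T(-\theta)u)$; only then does your ball-type continuity argument (which is correct as far as it goes, including the uniform bound on $\|q(u)\|_{H^1}$ from \eqref{aj2}) apply and give $R(\lambda,u)<0$ for $|\lambda|<\lambda_1$, uniformly over $U_{\eps_1}(\phi_{\omega})$. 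So your proof follows the paper's route but omits the reduction by symmetry, and without it the argument does not close.
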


\begin{proof}
For $u\in U_{\eps_0}(\phi_{\omega})$ and $\lambda\in \R$, 
by Taylor's expansion, we have 
\begin{equation}\label{So1}
S_{\omega}(u+\lambda q(u))=S_{\omega}(u)+\lambda P(u)
+\lambda^2 \int_{0}^{1} (1-s) R(\lambda s,u)\,ds,
\end{equation}
where we used \eqref{P1} and put  
$$R(\lambda,u):=\dual{S_{\omega}''(u+\lambda q(u))q(u)}{q(u)}.$$
Here, we remark that 
\begin{align*}
&P(T(\xi)u)=\dual{S_{\omega}'(T(\xi)u)}{T(\xi)q(u)}=P(u), \\
&R(\lambda,T(\xi)u)=\dual{S_{\omega}''(T(\xi)(u+\lambda q(u)))T(\xi)q(u)}{T(\xi)q(u)}
=R(\lambda,u)
\end{align*}
for $\xi \in  \T\times \R$, $\lambda\in \R$ and $u\in H^1(\R)$. 
Moreover, since 
$$R(0,\phi_{\omega})
=\dual{S_{\omega}''(\phi_{\omega})q(\phi_{\omega})}{q(\phi_{\omega})}
=\dual{S_{\omega}''(\phi_{\omega})\psi}{\psi}<0,$$
by the continuity of $R(\lambda,u)$ with respect to $\lambda$ and $u$, 
there exist constants $\lambda_1>0$ and $\eps_1\in (0,\eps_0)$ 
such that $R(\lambda,u)<0$ 
for all $\lambda\in (-\lambda_1,\lambda_1)$ and $u\in U_{\eps_1}(\phi_{\omega})$. 
Thus, by \eqref{So1}, we have 
\begin{equation*}
S_{\omega}(u+\lambda q(u))\le S_{\omega}(u)+\lambda P(u)
\end{equation*}
for all $\lambda\in (-\lambda_1,\lambda_1)$ and $u\in U_{\eps_1}(\phi_{\omega})$. 
\end{proof}

\begin{lemma}\label{lem4.2}
There exist constants $\eps_2\in (0,\eps_1)$ and $\lambda_2\in (0,\lambda_1)$ 
that satisfy the following. 
For any $u\in U_{\eps_2}(\phi_{\omega})$, 
there exists $\Lambda (u) \in (-\lambda_2,\lambda_2)$ such that 
\begin{align*}
K_{\omega} \left(u+\Lambda (u) q(u)\right)=0, \quad 
u+\Lambda (u) q(u)\ne 0. 
\end{align*}
\end{lemma}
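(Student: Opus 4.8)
The plan is to realize $\Lambda(u)$, for each fixed $u$ near $\phi_{\omega}$, as a zero of the one-variable function $\lambda\mapsto K_{\omega}(u+\lambda q(u))$ produced by the intermediate value theorem. Define
$$G(\lambda,u):=K_{\omega}(u+\lambda q(u)), \quad u\in U_{\eps_0}(\phi_{\omega}), \quad \lambda\in \R.$$
Since $K_{\omega}$ is of class $C^1$ on $H^1(\R)$ and $u\mapsto q(u)$ is continuous from $U_{\eps_0}(\phi_{\omega})$ into $H^1(\R)$ (by Lemma \ref{lem3.3} and the definition \eqref{def:qu}), the map $G$ is continuous, and for each fixed $u$ the map $\lambda\mapsto G(\lambda,u)$ is smooth. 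At the base point, $G(0,\phi_{\omega})=K_{\omega}(\phi_{\omega})=0$, and using $q(\phi_{\omega})=\psi$ from Lemma \ref{lem3.4} (3),
$$\pd_{\lambda}G(0,\phi_{\omega})=\dual{K_{\omega}'(\phi_{\omega})}{q(\phi_{\omega})}=\dual{K_{\omega}'(\phi_{\omega})}{\psi}.$$

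The crucial point is that this derivative does not vanish, and this is precisely where Lemma \ref{lem:vc2} is used: by its contrapositive, the hypothesis $\dual{S_{\omega}''(\phi_{\omega})\psi}{\psi}<0$ in \eqref{ass:psi} forces $\dual{K_{\omega}'(\phi_{\omega})}{\psi}\ne 0$. Consequently $\lambda\mapsto G(\lambda,\phi_{\omega})$ is strictly monotone near $\lambda=0$ and changes sign there, so one can fix $\lambda_2\in(0,\lambda_1)$ for which $G(\lambda_2,\phi_{\omega})$ and $G(-\lambda_2,\phi_{\omega})$ have opposite (nonzero) signs. By continuity of $u\mapsto G(\pm\lambda_2,u)$, there is $\eps_2\in(0,\eps_1)$ such that for every $u\in U_{\eps_2}(\phi_{\omega})$ the two values $G(\lambda_2,u)$ and $G(-\lambda_2,u)$ still have opposite signs; the intermediate value theorem applied to the continuous map $\lambda\mapsto G(\lambda,u)$ then yields $\Lambda(u)\in(-\lambda_2,\lambda_2)$ with $K_{\omega}(u+\Lambda(u)q(u))=0$.

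It remains to secure $u+\Lambda(u)q(u)\ne 0$ after possibly shrinking $\eps_2$ and $\lambda_2$. For this I use that $\|q(u)\|_{H^1}$ is bounded uniformly on $U_{\eps_0}(\phi_{\omega})$: since $T(\alpha(u))$ acts as an isometry on $H^1(\R)$, the $L^2$-coefficients in \eqref{def:qu} are controlled by $\|u\|_{L^2}\le \|\phi_{\omega}\|_{L^2}+\eps_0$, while $\|a_j(u)\|_{H^1}\le C$ by \eqref{aj2}; hence $\|q(u)\|_{H^1}\le C'$ for some constant $C'$. Therefore $\|u+\Lambda(u)q(u)-\phi_{\omega}\|_{H^1}\le \eps_2+\lambda_2 C'$, which is strictly smaller than $\|\phi_{\omega}\|_{H^1}>0$ once $\eps_2$ and $\lambda_2$ are taken small enough, so that $u+\Lambda(u)q(u)$ cannot be zero.

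I expect the transversality $\dual{K_{\omega}'(\phi_{\omega})}{\psi}\ne 0$ to be the only nonroutine ingredient; once it is in hand via Lemma \ref{lem:vc2}, the rest is a soft continuity-and-IVT argument. One could alternatively invoke the implicit function theorem directly on $G$ to obtain a $C^1$ selection $\Lambda(\cdot)$ with $\Lambda(\phi_{\omega})=0$, but that would require $C^1$ dependence of $q$ on $u$, whereas the intermediate value theorem route needs only the continuity that Lemma \ref{lem3.3} and the definitions supply, and it already furnishes the existence asserted in the statement.
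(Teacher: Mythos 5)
Your proposal is correct in substance and follows the same route as the paper: the one non-routine ingredient is the transversality $\dual{K_{\omega}'(\phi_{\omega})}{\psi}\ne 0$, which you obtain exactly as the paper does, from the contrapositive of Lemma \ref{lem:vc2} and the hypothesis $\dual{S_{\omega}''(\phi_{\omega})\psi}{\psi}<0$; after that, an intermediate value argument in $\lambda$ produces $\Lambda(u)$, and the non-vanishing is secured by the uniform $H^1$ bound on $q(u)$. The only organizational difference is that the paper, instead of fixing opposite signs of $G(\pm\lambda_2,\phi_{\omega})$ and propagating them by continuity, writes $K_{\omega}(u+\lambda q(u))=K_{\omega}(u)+\lambda\int_0^1\dual{K_{\omega}'(u+s\lambda q(u))}{q(u)}\,ds$ and arranges a uniform lower bound $\dual{K_{\omega}'(u+\lambda q(u))}{q(u)}\ge\frac12\dual{K_{\omega}'(\phi_{\omega})}{\psi}$ for $|\lambda|\le\lambda_2$, $u\in U_{\eps_2}(\phi_{\omega})$, together with the smallness $|K_{\omega}(u)|<\frac{\lambda_2}{2}\dual{K_{\omega}'(\phi_{\omega})}{\psi}$; this quantitative monotonicity is equivalent in effect to your endpoint-sign argument.

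Two points in your write-up need repair, both stemming from the fact that $U_{\eps}(\phi_{\omega})$ is a neighborhood of the whole orbit $\{T(\theta)\phi_{\omega}:\theta\in\R^2\}$, not a ball around $\phi_{\omega}$. First, continuity of $u\mapsto G(\pm\lambda_2,u)$ at the single point $\phi_{\omega}$ only yields sign persistence on a set of the form $\{u:\|u-\phi_{\omega}\|_{H^1}<\delta\}$; to pass to the tube $U_{\eps_2}(\phi_{\omega})$ you must invoke the invariance $G(\lambda,T(\xi)u)=G(\lambda,u)$, which follows from $q(T(\xi)u)=T(\xi)q(u)$ (Lemma \ref{lem3.4} (1)) together with the $T$-invariance of $K_{\omega}$; the paper makes the analogous reduction explicit in the proof of Lemma \ref{lem4.1} and uses it tacitly in Lemma \ref{lem4.2}, so this is a gloss you should fill rather than a divergence from the paper. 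Second, your inequality $\|u+\Lambda(u)q(u)-\phi_{\omega}\|_{H^1}\le\eps_2+\lambda_2 C'$ is false for general $u$ in the tube (take $u=T(\theta)\phi_{\omega}$ with a large translation $\theta_1$, for which $\|u-\phi_{\omega}\|_{H^1}$ is of order $\|\phi_{\omega}\|_{H^1}$). The correct version: choose $\theta$ with $\|u-T(\theta)\phi_{\omega}\|_{H^1}<\eps_2$, and conclude $\|u+\Lambda(u)q(u)\|_{H^1}\ge\|T(\theta)\phi_{\omega}\|_{H^1}-\eps_2-\lambda_2 C'=\|\phi_{\omega}\|_{H^1}-\eps_2-\lambda_2 C'>0$, using that $T(\theta)$ is an isometry of $H^1(\R)$. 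With these two adjustments your argument is complete and matches the paper's.
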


\begin{proof}
First, since $\dual{S_{\omega}''(\phi_{\omega})\psi}{\psi}<0$, 
by Lemma \ref{lem:vc2}, we have $\dual{K_{\omega}'(\phi_{\omega})}{\psi}\ne 0$. 
Thus, without loss of generality, we may assume that 
$\dual{K_{\omega}'(\phi_{\omega})}{\psi}>0$. 

For $u\in U_{\eps_0}(\phi_{\omega})$ and $\lambda\in \R$, we have 
\begin{equation}\label{Ko1}
K_{\omega}(u+\lambda q(u))
=K_{\omega}(u)+\lambda \int_{0}^{1} \dual{K_{\omega}'(u+s\lambda q(u))}{q(u)}\,ds. 
\end{equation}
Since $\dual{K_{\omega}'(\phi_{\omega})}{q(\phi_{\omega})}
=\dual{K_{\omega}'(\phi_{\omega})}{\psi}>0$, 
by the continuity of the function $\dual{K_{\omega}'(u+\lambda q(u))}{q(u)}$ 
with respect to $\lambda$ and $u$, 
there exist constants $\lambda_2\in (0,\lambda_1)$ and $\eps_2\in (0,\eps_1)$ such that 
\begin{equation}\label{Ko2}
\dual{K_{\omega}'(u+\lambda q(u))}{q(u)}
\ge \frac{1}{2}\dual{K_{\omega}'(\phi_{\omega})}{\psi}
\end{equation}
for all $\lambda \in [-\lambda_2,\lambda_2]$ and $u\in U_{\eps_2}(\phi_{\omega})$. 
Moreover, since $K_{\omega}(\phi_{\omega})=0$, 
taking $\eps_2$ smaller if necessary, we have  
\begin{equation}\label{Ko3}
|K_{\omega}(u)|<\frac{\lambda_2}{2}\dual{K_{\omega}'(\phi_{\omega})}{\psi}, \quad 
u\in U_{\eps_2}(\phi_{\omega}). 
\end{equation}

Let $u\in U_{\eps_2}(\phi_{\omega})$. 
If $K_{\omega}(u)<0$, then it follows from \eqref{Ko1}--\eqref{Ko3} that 
\begin{align*}
K_{\omega}(u+\lambda_2 q(u))
&=K_{\omega}(u)+\lambda_2 \int_{0}^{1} \dual{K_{\omega}'(u+s\lambda_2 q(u))}{q(u)}\,ds \\
&>-\frac{\lambda_2}{2}\dual{K_{\omega}'(\phi_{\omega})}{\psi}
+\frac{\lambda_2}{2}\dual{K_{\omega}'(\phi_{\omega})}{\psi}=0.
\end{align*}
Since the function $\lambda\mapsto K_{\omega}(u+\lambda q(u))$ is continuous, 
there exists $\Lambda (u)\in (0,\lambda_2)$ such that 
\begin{equation}\label{Ko4}
K_{\omega}(u+\Lambda (u) q(u))=0.
\end{equation}
Similarly, if $K_{\omega}(u)>0$, then we have 
\begin{align*}
K_{\omega}(u-\lambda_2 q(u))
&=K_{\omega}(u)-\lambda_2 \int_{0}^{1} \dual{K_{\omega}'(u-s\lambda_2 q(u))}{q(u)}\,ds \\
&<\frac{\lambda_2}{2}\dual{K_{\omega}'(\phi_{\omega})}{\psi}
-\frac{\lambda_2}{2}\dual{K_{\omega}'(\phi_{\omega})}{\psi}=0.
\end{align*}
Thus, there exists $\Lambda (u)\in (-\lambda_2,0)$ such that \eqref{Ko4}. 
If $K_{\omega}(u)=0$, 
taking $\Lambda (u)=0$, \eqref{Ko4} is satisfied. 

Finally, by \eqref{aj2} and \eqref{def:qu}, 
taking $\lambda_2$ and $\eps_2$ smaller if necessary, 
we have $u+\Lambda (u) q(u)\ne 0$ for all $u\in U_{\eps_2}(\phi_{\omega})$. 
This completes the proof. 
\end{proof}

\begin{lemma}\label{lem4.3}
Let $\lambda_2$ and $\eps_2$ be the positive constants given in Lemma \ref{lem4.2}. 
Then, 
$$S_{\omega}(\phi_{\omega})\le S_{\omega}(u)+\lambda_2 |P(u)|$$
for all $u\in U_{\eps_2}(\phi_{\omega})$. 
\end{lemma}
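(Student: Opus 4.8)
The plan is to combine the three preceding lemmas, the corrected point $u+\Lambda(u)q(u)$ serving as the bridge between them. Fix $u\in U_{\eps_2}(\phi_{\omega})$. First I would invoke Lemma \ref{lem4.2} to produce $\Lambda(u)\in(-\lambda_2,\lambda_2)$ such that the point $v:=u+\Lambda(u)q(u)$ satisfies $K_{\omega}(v)=0$ and $v\ne 0$. The role of the direction $q(u)$ here is precisely that it allows $K_{\omega}$ to be driven to zero by a small perturbation while keeping the perturbed point nonzero and close to $\phi_{\omega}$.

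Next I would bring in the variational characterization. Since $v\ne 0$ and $K_{\omega}(v)=0$, the second assertion of Lemma \ref{lem:vc1} applies and gives
$$S_{\omega}(\phi_{\omega})\le S_{\omega}(v)=S_{\omega}(u+\Lambda(u)q(u)).$$
This is the step that converts the minimality of $\phi_{\omega}$ on the constraint manifold $\{K_{\omega}=0\}$ into a lower bound for the action at the corrected point.

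Then I would estimate $S_{\omega}(v)$ from above using Lemma \ref{lem4.1}. Because $\eps_2<\eps_1$ and $|\Lambda(u)|<\lambda_2<\lambda_1$, the hypotheses of Lemma \ref{lem4.1} are satisfied with $\lambda=\Lambda(u)$ and the same base point $u$, so
$$S_{\omega}(u+\Lambda(u)q(u))\le S_{\omega}(u)+\Lambda(u)P(u).$$
Chaining this with the previous inequality and bounding $\Lambda(u)P(u)\le|\Lambda(u)|\,|P(u)|\le\lambda_2|P(u)|$ yields $S_{\omega}(\phi_{\omega})\le S_{\omega}(u)+\lambda_2|P(u)|$, which is the claim.

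I do not expect a serious obstacle: the analytic content was already absorbed into Lemmas \ref{lem4.1} and \ref{lem4.2}, and this step merely assembles them. The only point requiring attention is the bookkeeping of constants, namely checking that the $\Lambda(u)$ furnished by Lemma \ref{lem4.2} lies in the interval $(-\lambda_1,\lambda_1)$ on which Lemma \ref{lem4.1} is valid, and that $U_{\eps_2}(\phi_{\omega})\subset U_{\eps_1}(\phi_{\omega})$; both are guaranteed by the nesting $\lambda_2<\lambda_1$ and $\eps_2<\eps_1$ built into the statement of Lemma \ref{lem4.2}.
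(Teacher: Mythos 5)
Your proposal is correct and follows exactly the paper's own argument: Lemma \ref{lem4.2} produces the corrected point $u+\Lambda(u)q(u)$ on the constraint set, Lemma \ref{lem:vc1} gives the lower bound $S_{\omega}(\phi_{\omega})\le S_{\omega}(u+\Lambda(u)q(u))$, and Lemma \ref{lem4.1} with $\lambda=\Lambda(u)$ gives the upper bound, after which $|\Lambda(u)|<\lambda_2$ finishes the estimate. The bookkeeping of constants you flag ($\eps_2<\eps_1$, $\lambda_2<\lambda_1$) is indeed the only point to check, and it is built into Lemma \ref{lem4.2} just as you say.
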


\begin{proof}
By Lemma \ref{lem4.2}, for any $u\in U_{\eps_2}(\phi_{\omega})$, 
there exists $\Lambda (u)\in (-\lambda_2,\lambda_2)$ such that 
$K_{\omega}(u+\Lambda (u) q(u))=0$ and $u+\Lambda (u) q(u)\ne 0$. 
Then, it follows from Lemma \ref{lem:vc1} that 
\begin{equation}\label{So3}
S_{\omega}(\phi_{\omega}) \le S_{\omega}(u+\Lambda (u) q(u)), \quad 
u\in U_{\eps_2}(\phi_{\omega}). 
\end{equation}
Thus, by Lemma \ref{lem4.1} and \eqref{So3}, 
for $u\in U_{\eps_2}(\phi_{\omega})$, we have 
\begin{align*}
S_{\omega}(\phi_{\omega})
&\le S_{\omega} \left(u+\Lambda (u) q(u) \right)
\le S_{\omega}(u)+\Lambda (u) P(u) \\
&\le S_{\omega}(u)+|\Lambda (u)| |P(u)|
\le S_{\omega}(u)+\lambda_2 |P(u)|. 
\end{align*}
This completes the proof. 
\end{proof}

We are now in a position to give the Proof of Proposition \ref{prop1}. 

\begin{proof}[Proof of Proposition \ref{prop1}]
Suppose that $T(\omega t)\phi_{\omega}$ is stable. 
For $\lambda$ close to $0$, 
let $u_{\lambda}(t)$ be the solution of \eqref{dnls} 
with $u_{\lambda}(0)=\phi_{\omega}+\lambda \psi$. 
Since $T(\omega t)\phi_{\omega}$ is stable, 
there exists $\lambda_3\in (0,\lambda_0)$ such that 
if $|\lambda|<\lambda_3$, 
then $u_{\lambda}(t)\in U_{\eps_2}(\phi_{\omega})$
for all $t\ge 0$. 
Moreover, by the definition \eqref{def:Au} of $A$, 
there exists $C_1>0$ such that 
$|A(v)|\le C_1$ for all $v\in U_{\eps_2}(\phi_{\omega})$.  

Let $\lambda\in (-\lambda_3,0)\cup (0,\lambda_3)$. 
Then, by Lemma \ref{lem3.1}, we have 
$$\delta_{\lambda}:=S_{\omega}(\phi_{\omega})-S_{\omega}(u_{\lambda}(0))>0.$$
Moreover, by Lemma \ref{lem4.3} and the conservation of $S_{\omega}$, we have 
$$0<\delta_{\lambda}=S_{\omega}(\phi_{\omega})-S_{\omega}(u_{\lambda}(t))
\le \lambda_2 |P(u_{\lambda}(t))|, \quad t\ge 0.$$
Since $t\mapsto P(u_{\lambda}(t))$ is continuous, 
we see that either 
(i) $P(u_{\lambda}(t))\ge \delta_{\lambda}/\lambda_2$ for all $t\ge 0$, or 
(ii) $P(u_{\lambda}(t))\le -\delta_{\lambda}/\lambda_2$ for all $t\ge 0$. 
Moreover, by Lemma \ref{lem:AP}, we have 
$$\frac{d}{dt} A(u_{\lambda}(t))=P(u_{\lambda}(t)), \quad t\ge 0.$$

Therefore, we see that $A(u_{\lambda}(t))\to \infty$ as $t\to \infty$ for the case (i), 
and $A(u_{\lambda}(t))\to -\infty$ as $t\to \infty$  for the case (ii). 
This contradicts the fact that 
$|A(u_{\lambda}(t))|\le C_1$ for all $t\ge 0$. 
Hence, $T(\omega t)\phi_{\omega}$ is unstable. 
\end{proof}

\vspace{2mm} \noindent 
{\bf Acknowledgment}. 
This work was supported by JSPS KAKENHI Grant Number 24540163.

\end{document}